\definecolor{DarkGreen}{rgb}{0.2,0.6,0.2}
\def\eps{\varepsilon}
\def\Om{\Omega}\def\om{\omega}
\def\Ind#1{{\mathbbmss 1}_{_{\scriptstyle #1}}}
\def\lra{\longrightarrow}
\def\ua{\uparrow}
\def\da{\downarrow}
\def\wh{\widehat}
\def\wt{\widetilde}
\def\ignore#1{}
\def\bR{{\mathbb R}}
\def\bZ{\mathbb Z}
\def\bN{\mathbb N}
\def\bP{{\mathbb P}}
\def\bE{{\mathbb E}}
\numberwithin{equation}{section}
\def\cF{{\mathscr F}}
\newtheorem{theorem}{Theorem}[section]
\newtheorem{proposition}[theorem]{Proposition}
\newtheorem{lemma}[theorem]{Lemma}
\newtheorem{corollary}[theorem]{Corollary}
\theoremstyle{definition}
\newtheorem{example}[theorem]{Example}
\newtheorem{remark}[theorem]{Remark}
\def\Ind#1{{\mathbbmss 1}_{_{\scriptstyle #1}}}
\def\lra{\longrightarrow}
\def\eps{\varepsilon}
\def\<{\langle}
\def\>{\rangle}
\def\wt#1{\widetilde{#1}}
 \def\lra{\longrightarrow}
 \def\ua{\uparrow}
 \def\da{\downarrow}
 \def\wh{\widehat}
 \def\wt{\widetilde}
\begin{document}

\title{A limit theorem for Bernoulli convolutions\\ and the $\Phi$-variation of functions in the Takagi class}
\author{ Xiyue Han\thanks{Department of Statistics and Actuarial Science, University of Waterloo. E-mail: {\tt xiyue.han@uwaterloo.ca}} \and Alexander Schied\setcounter{footnote}{6}\thanks{Department of Statistics and Actuarial Science, University of Waterloo. E-mail: {\tt aschied@uwaterloo.ca}}
	 \and
	Zhenyuan Zhang\setcounter{footnote}{3}\thanks{
		 Department of Mathematics, Stanford University. E-mail: {\tt zzy@stanford.edu}
	 		\hfill\break The authors gratefully acknowledge financial support  from the
 Natural Sciences and Engineering Research Council of Canada through grant RGPIN-2017-04054}}
        
        \date{\normalsize December 12, 2021}

\vspace{-1cm}

\maketitle

\begin{abstract}We consider a probabilistic approach to compute the Wiener--Young $\Phi$-variation of fractal functions in the Takagi class. Here, the $\Phi$-variation is understood as a generalization of the quadratic variation or, more generally, the $p^{\text{th}}$ variation of a trajectory  computed along the sequence of dyadic partitions of the unit interval.   The functions $\Phi$ we consider  form a very wide class of  functions that are regularly varying at zero. Moreover, for each such function $\Phi$,   our results provide in a straightforward manner  a large and tractable class of functions that have nontrivial and linear $\Phi$-variation.  As a corollary, we also construct stochastic processes whose sample paths have nontrivial, deterministic, and linear $\Phi$-variation for each function $\Phi$ from our class. The proof  of our main result relies on a limit theorem for certain sums of Bernoulli random variables that converge to an infinite Bernoulli convolution.  \end{abstract}

\noindent{\it Key words:} Wiener--Young $\Phi$--variation,  Takagi class, pathwise It\^o calculus, infinite Bernoulli convolution, central limit theorem, stochastic process with prescribed $\Phi$-variation

\medskip

\noindent{\it MSC 2020:} 60F25, 28A80,  26A12,  60H05

\section{Introduction}

Probabilistic models  of continuous random trajectories typically rely on 
solutions to stochastic differential equations or, more generally, on continuous semimartingales. The huge success of those models is in no small part due to  the fact that they can be analyzed by means of It\^o calculus. 
% The ultimate reason why It\^o calculus works for continuous semimartingales is that their sample paths admit a (typically nontrivial) quadratic variation, which exists almost surely  along a fixed refining sequence of partitions. 
It\^o calculus is however not restricted to the sample paths of a semimartingale:  F\"ollmer \cite{FoellmerIto} showed that every continuous function that admits a continuous quadratic variation along a fixed refining sequence of partitions can be used as an integrator for It\^o integration and that a corresponding It\^o formula holds. As shown in 
\cite{Gantert,MishuraSchied,SchiedTakagi}, this includes many functions previously studied in fractal analysis and geometry.

Cont and Perkowski \cite{ContPerkowski} recently extended F\"ollmer's pathwise It\^o formula to integrators with higher than quadratic variation, the so-called $p^{\text{th}}$ variation. While quadratic variation is based on the squared increments of a function, $p^{\text{th}}$ variation is based on the $p^{\text{th}}$ power of the increments, where $p\ge1$.  In the realm of stochastic processes, the sample paths of fractional Brownian motion with Hurst index $1/p$ constitute the best-known class of such trajectories. At the same time, recent applications, such as the theory of rough volatility models based on the seminal paper \cite{GatheralRosenbaum} by Gatheral et al., have intensified the interest in creating new models for \lq\lq rough" phenomena beyond the case of quadratic variation. 
Also in this strand of literature, the focus has so far been on models with finite and nontrivial $p^{\text{th}}$ variation. 

 From a mathematical point of view, however, there is no reason why attention should be restricted to variation based on power functions. A natural and much broader concept is given by the notion of  $\Phi$-variation in the Wiener--Young sense. Here,  for a function $\Phi:[0,\infty)\to[0,\infty)$,  the $\Phi$-variation of a continuous function $f:[0,1]\to\bR$  along the dyadic partitions is given by
 \begin{equation}\label{Phi var intro def}
\<f\>^\Phi_t:=\lim_{n\uparrow\infty}\sum_{k=0}^{\lfloor t2^n\rfloor}\Phi\big(|f((k+1)2^{-n})-f(k2^{-n})|\big),\qquad 0\le t\le1, \end{equation}
provided that the limit exists for all $t$. 
Yet, most authors still focus exclusively on the case of power variation. Notable exceptions are the paper \cite{MarcusRosenPhi} by Marcus and Rosen, in which the $\Phi$-variation of  Gaussian processes with stationary increments and local times of symmetric L\'evy processes is obtained, and K\^ono \cite{KonoOscillation}. Note that the $\Phi$-variation along a fixed refining sequence of partitions as defined in \eqref{Phi var intro def} typically differs from the $\Phi$-variation defined as a supremum taken over \emph{all} finite partitions.  For Gaussian processes, the latter concept was studied, e.g., by Taylor \cite{Taylor} for $\Phi(x)=x^2$ and by Kawada and K\^ono \cite{KawadaKono} for more general functions $\Phi$.

In this paper, we use a probabilistic approach to explore a class of fractal trajectories $f$ that admit the $\Phi$-variation $\<f\>^\Phi_t=t$ for functions $\Phi$ of the form 
\begin{equation}\label{Phiq intro}
\Phi(x) := \begin{cases}0&\text{if $x=0$,}\\
x^{p}g\big({-p}\log_2 x\big)^{-p/2}&\text{if $0<x<1$},
\end{cases}
\end{equation}
where $p\in[1,\infty)$ and $g:[0,\infty)\to(0,\infty)$ is any regularly varying function\footnote{ Recall that a measurable and strictly positive function $g$, which is defined on some interval $[a,\infty)$, is called regularly varying (at infinity) with index $\varrho\in\bR$ if 
$g(\lambda x)/g(x)\to\lambda^\varrho$ as $x\ua\infty$ for all $\lambda>0$. If $\varrho=0$, the function $g$ is also called slowly varying. Throughout this paper, we will always assume that all regularly varying functions are defined on $[0,\infty)$. This assumption can be made without loss of generality, as one can always consider the function $x\mapsto g(a\vee x)$. \label{reg varying footnote}}. In doing so, we address several high-level issues. First, we illustrate that finite and nontrivial $p^{\text{th}}$ power variation, which has so far been the default paradigm for characterizing the roughness of trajectories, appears to be the exception rather than the rule.  In particular, one should expect $\Phi$-variation rather than power variation when studying continuous trajectories that do not arise as sample paths of a semimartingale. Second, for every function $\Phi$ of the form \eqref{Phiq intro}, we construct explicit examples of trajectories with finite linear $\Phi$-variation along the dyadic partitions. These examples can be used in further analyses involving \lq\lq rough" trajectories. Moreover, the idea underlying our construction of functions with prescribed $\Phi$-variation can probably be extended to more flexible models and into the realm of stochastic processes.

The class of functions $f$ we consider here belong to the so-called Takagi class, which was introduced by Hata and Yamaguti \cite{HataYamaguti} and motivated by Takagi's \cite{Takagi} celebrated example of a continuous nowhere differentiable function. This class was chosen  due to its richness and tractability and also because it is closely related to the Wiener--L\'evy construction of Brownian motion. The Takagi class consists of all functions $f:[0,1]\to \bR$ that admit an expansion of the form 
\begin{align}\label{vdw1}
f(t):=\sum_{m=0}^\infty \alpha_m\varphi(2^mt),\qquad t\in[0,1],
\end{align}
where $\varphi(t)=\min_{z\in\bZ}|z-t|$ is the tent map and  $\{\alpha_m\}_{m \in \bN_0}$ is a sequence of real numbers for which the series $\sum_{m=0}^\infty\alpha_m$ converges absolutely. The special choice $\alpha_m=a^m$ for some $a\in(-1,1)$ yields the class of so-called Takagi--Landsberg functions, which, for   $1/2<|a|<1$ and $p=-\log_{|a|}2$, have finite nontrivial and linear $p^{\text{th}}$ variation  as shown in \cite{MishuraSchied2}. For $|a|<1/2$ it is well known that $f$ is of finite total variation. The borderline case $a=1/2$ corresponds to the classical Takagi function, which is nowhere differentiable and hence not of finite total variation, even though $p=-\log_{1/2}2=1$. Instead, for $|a|=1/2$, the function $f$ has linear $\Phi$-variation, $\<f\>_t^\Phi=t$, for $\Phi(x)=x\sqrt{\pi/(-2\log_2 x)}$, as  shown by the authors in \cite{HanSchiedZhang1}.  Note that this function $\Phi$ is a special case of  \eqref{Phiq intro}. 
Our method also extends to an even richer class of functions for which each scaled phase  of the tent map in \eqref{vdw1} is multiplied with an arbitrary sign (see \eqref{flexible class} for details). This latter class can be fitted to empirical time series  and gives also rise to a class of stochastic processes if the signs are chosen randomly.

Our approach to the $\Phi$-variation of functions $f$ in the Takagi class depends on the parameter $p$ in \eqref{Phiq intro}. For $p=1$, our argument extends the approach via the central limit theorem for the Rademacher functions in the Faber--Schauder development of $f$ as presented in \cite{HanSchiedZhang1}. For $p>1$, however, a new limit theorem is needed, which might be of independent interest. If   $\{\beta_m\}_{m\in \bN_0}$ is a sequence of real numbers and $Y_1,Y_2,\dots$ is an i.i.d.~sequence of  symmetric Bernoulli random variables,  we investigate the limit of the \lq\lq convolutions"
$$\frac1{s_n}\sum_{m=1}^{n}\beta_{n-m}Y_m \qquad\text{ where}\qquad s_n^2:=\sum_{m=0}^{n-1}\beta_m^2.
$$
We show that if there exist $q>0$, $b>1$, and a slowly varying function $\ell$ such that 
\begin{equation}\label{snL2n condition2 intro}\lim_{n\to\infty}\frac{s_n^2}{2^{2qn}\ell(b^n)}=1,
\end{equation}
then 
\begin{equation}\label{Z def eq intro}
\frac1{s_n}\sum_{m=1}^{n}\beta_{n-m}Y_m\lra \sqrt{1-2^{-2q}}\sum_{m=1}^\infty 2^{-qm}Y_m\qquad \text{in $L^\infty$.}
\end{equation}
Note that the law of the limiting random variable $Z$ is a scaled version of the infinite Bernoulli convolution with parameter $2^{-q}$. In Section \ref{limit section}, we will present an extended version of this result in which the $Y_m$ are neither required to be independent nor identically distributed. This approach to the $\Phi$-variation via the condition \eqref{snL2n condition2 intro} and the limiting result \eqref{Z def eq intro} is to some extent motivated by Gladyshev's theorem \cite{Gladyshev} (see also  the book \cite{MarcusRosen} by Marcus and Rosen for a comprehensive survey). It extends previous work in \cite{MishuraSchied2,SchiedZZhang,HanSchiedZhang1}.

The paper is structured as follows. In \Cref{limit section}, we derive the convergence result \eqref{Z def eq intro}
 and the central limit theorem that will be needed for  our results on the $\Phi$-variation of functions in the Takagi class. Our main result on this topic, \Cref{Theorem on Phi-variation}, is stated in \Cref{Phi variation section}. The proof of \Cref{Theorem on Phi-variation} is given in \Cref{proofs section}.

 \section{A limit theorem for Bernoulli-type convolutions}\label{limit section}

For some $p\in[1,\infty]$, let $Y_1,Y_2,..$ be a sequence of random variables in $L^p:=L^p(\Om,\cF,\bP)$, where $(\Omega,\cF,\bP)$ is a given probability space. We assume for simplicity that the $L^p$-norms of $Y_m$ are uniformly bounded. Let furthermore   $\{\beta_m\}_{m\in \bN_0}$ be a sequence of real numbers  and consider the random variables
\begin{equation*}
Z_n:=\sum_{m=1}^{n}\beta_{n-m}Y_m,\qquad n\in\bN.
\end{equation*}
We are first concerned with the limit of
$$\frac1{s_n}Z_n \qquad\text{where}\qquad s_n^2:=\sum_{m=0}^{n-1}\beta_m^2
$$
as $n\ua\infty$.  Our corresponding result uses the concept of a slowly varying function, which was recalled in the footnote on page~\pageref{reg varying footnote}.

\begin{theorem}\label{conv thm} Suppose that $\beta_m\ge0$ for all $m$ and that there exist $q>0$, $b>1$, and  a slowly varying function $\ell$ such that 
\begin{equation}\label{snL2n condition2}\lim_{n\to\infty}\frac{s_n^2}{2^{2qn}\ell(b^n)}=1.
\end{equation}
Then 
\begin{equation*}\label{Z def eq}
\frac1{s_n}Z_n\lra Z:=\sqrt{2^{2q}-1}\sum_{m = 1}^\infty 2^{-qm}Y_m\qquad \text{in $L^p$.}
\end{equation*}
\end{theorem}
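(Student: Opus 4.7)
The strategy is to reduce the claimed $L^p$ convergence to an $\ell^1$ convergence of coefficients, and then to verify the coefficient convergence by dominated convergence on $\bN$. Writing
$$
\frac{Z_n}{s_n}=\sum_{m=1}^\infty a_m^{(n)} Y_m\qquad\text{and}\qquad Z=\sum_{m=1}^\infty a_m^{(\infty)} Y_m,
$$
where $a_m^{(n)}:=(\beta_{n-m}/s_n)\Ind{m\le n}$ and $a_m^{(\infty)}:=\sqrt{2^{2q}-1}\,2^{-qm}$, the triangle inequality in $L^p$ combined with the uniform bound $\sup_m\|Y_m\|_p\le C$ yields
$$
\left\|\frac{Z_n}{s_n}-Z\right\|_p\le C\sum_{m=1}^\infty\bigl|a_m^{(n)}-a_m^{(\infty)}\bigr|.
$$
Thus it suffices to prove that $\sum_m|a_m^{(n)}-a_m^{(\infty)}|\to 0$. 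In particular, $Z$ itself is well-defined in $L^p$ because $\sum_m 2^{-qm}<\infty$.

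\textbf{Pointwise convergence of coefficients.} From $\beta_m^2=s_{m+1}^2-s_m^2$, \eqref{snL2n condition2}, and the fact that slow variation gives $\ell(b^{m+1})/\ell(b^m)\to 1$, I would first derive
$$
\beta_m^2\sim (1-2^{-2q})\,2^{2q(m+1)}\ell(b^{m+1})\qquad\text{as }m\to\infty.
$$
For each \emph{fixed} $m\ge 1$, substituting this asymptotic and using the property $\ell(b^{n-m+1})/\ell(b^n)\to 1$ then gives $a_m^{(n)}\to \sqrt{1-2^{-2q}}\,2^{q(1-m)}=a_m^{(\infty)}$.

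\textbf{Main obstacle: uniform summable envelope.} The substantive step is finding a summable dominating function $g(m)$ with $a_m^{(n)}\le g(m)$ for all $n$. Here I would invoke Potter's bound for the slowly varying function $\ell$: for any $\delta>0$ there exist $C_\delta,X>0$ such that $\ell(x)/\ell(y)\le C_\delta\max\bigl((x/y)^\delta,(y/x)^\delta\bigr)$ for $x,y\ge X$. Using \eqref{snL2n condition2} to bound $s_{n-m+1}^2\le 2\cdot 2^{2q(n-m+1)}\ell(b^{n-m+1})$ and $s_n^2\ge \tfrac12\cdot 2^{2qn}\ell(b^n)$ for $n$ large enough, then applying Potter with $x=b^{n-m+1}$, $y=b^n$ (so $y/x=b^{m-1}$), one obtains
$$
a_m^{(n)}\le C\cdot 2^{-q(m-1)}b^{\delta(m-1)/2}
$$
whenever $n-m+1\ge\log_b X$. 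Choosing $\delta$ small enough so that $r:=2^{-q}b^{\delta/2}<1$ produces the required geometric envelope $a_m^{(n)}\le C r^{m-1}$. The remaining few indices, for which $n-m+1<\log_b X$, correspond to $a_m^{(n)}=\beta_k/s_n$ with $k$ ranging in a bounded set; since $s_n\to\infty$, their total contribution is $O(1/s_n)\to 0$.

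\textbf{Conclusion.} With pointwise convergence $a_m^{(n)}\to a_m^{(\infty)}$ and the summable majorant $a_m^{(n)}+a_m^{(\infty)}\le C r^{m-1}+\sqrt{2^{2q}-1}\,2^{-qm}$, the dominated convergence theorem on the counting measure of $\bN$ gives $\sum_m|a_m^{(n)}-a_m^{(\infty)}|\to 0$, which combined with the preliminary triangle-inequality estimate finishes the proof. The only delicate point is thus the uniform Potter-type domination in the third paragraph; the remaining computations are routine.
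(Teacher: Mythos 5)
Your proposal is correct and follows essentially the same route as the paper: reduce to $\ell^1$ convergence of the coefficients via Minkowski's inequality and the uniform bound on $\|Y_m\|_p$, extract the asymptotics of $\beta_m$ from $\beta_m^2=s_{m+1}^2-s_m^2$ together with slow variation of $\ell$, control the bulk of the coefficient sum with the Potter bounds, and dispose of the boundedly many boundary indices using $s_n\to\infty$ (which, as the paper does, should get its one-line justification: $x^{2q}\ell(x)\to\infty$ since $q>0$). The only difference is packaging---you organize the estimates as dominated convergence on $\bN$ with a geometric envelope, whereas the paper carries out the same splitting with explicit $\eps$--$\delta$ bookkeeping.
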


\begin{proof}[Proof of \Cref{conv thm}]  For $\delta\in(0,\delta_0]$, denote
$$\lambda:=\sqrt{2^{2q}-1},\qquad \lambda^{+}_\delta  := \sqrt{2^{2q}\dfrac{(1+\delta)^2}{1-\delta}-1},\quad\text{and}\quad \lambda^{-}_\delta : =  \sqrt{2^{2q}\dfrac{(1-\delta)^2}{1+\delta}-1},$$ 
 where $\delta_0\in(0,\min\{1,q\})$ is chosen such that the arguments of the  square roots are always positive. Thus,
\begin{equation}\label{delta0}
2^{2q}(1-\delta)^2-(1+\delta)>0\qquad\text{for all $\delta\in(0,\delta_0]$.}
\end{equation}  Let $\eps>0$ be given. We first choose $K\in\bN$ such that 
\begin{equation}\label{lambda times sum eq}
\lambda\sum_{m=K+1}^\infty 2^{-(q-\delta_0)m}<\eps.
\end{equation}
Then we choose 
$\delta_1\in(0,\delta_0]$ such that 
\begin{equation}\label{1-2deltam eq}
1-2^{-\delta m}<\eps\qquad \text{for $m=0,\dots, K$ and all $\delta\in(0,\delta_1]$.}
\end{equation}
 Finally, we choose $\delta\in(0,\delta_1]$ such that 
\begin{equation}\label{lambdapm eq}
\bigg|\frac{\lambda_\delta^\pm}{\sqrt{1\mp\delta}}-\lambda\bigg|<\eps.
\end{equation}
Given this $\delta$, we use \eqref{snL2n condition2} to pick $n_0\in\bN$ such that
\begin{equation}\label{N0(eps) eq}
    \ell(b^n)2^{2qn}(1 - \delta) < s_n^2 < \ell(b^n)2^{2qn}(1 + \delta)\qquad\text{ for all $n > n_0$.}
\end{equation}
 Hence,
 \begin{equation}\label{betamsquared eq} 
  2^{2q(m+1)}\big((1-\delta)\ell(b^{m+1}) - (1 + \delta)\ell(b^{m})2^{-2q}\big) < \beta_m^2<  2^{2q(m+1)}\big((1+\delta)\ell(b^{m+1}) - (1 - \delta)\ell(b^{m})2^{-2q}\big).
 \end{equation}
  Since $\ell$ is slowly varying, 
   we can choose $n_1\ge n_0$ such that 
\begin{equation*}
1-\delta<\frac{\ell(b\cdot b^{m})}{\ell(b^{m})}<1+\delta\qquad\text{for all $m\ge n_1$.}
\end{equation*}
 Combining this with \eqref{delta0}, we see that the left-hand side of \eqref{betamsquared eq} 
 is strictly positive for $m\ge n_1$. Hence, we may take square roots to get 
 \begin{equation}\label{an2n ineq}
    2^{q(m+1)}\sqrt{(1-\delta)\ell(b^{m+1}) - (1 + \delta)\ell(b^{m})2^{-2q}} < \beta_m<  2^{q(m+1)}\sqrt{(1+\delta)\ell(b^{m+1}) - (1 - \delta)\ell(b^{m})2^{-2q}}
    \end{equation}
for all $m\ge n_1$. Moreover, with \eqref{N0(eps) eq} we get that  for  $n>m \ge n_1$,
\begin{align*}
    \dfrac{\beta_{m}}{s_{n}} &< \dfrac{2^{q(m+1)}\sqrt{(1+\delta)\ell(b^{m+1}) - (1 - \delta)\ell(b^{m})2^{-2q}}}{\sqrt{(1-\delta)2^{2qn}\ell(b^n)}} = 2^{q(m-n)}\sqrt{\frac{\ell(b^m)}{\ell(b^n)}\bigg(2^{2q}\dfrac{1+\delta}{1-\delta}\dfrac{\ell(b^{m+1})}{\ell(b^m)}-1\bigg)}\\
       &< 2^{q(m-n)}\sqrt{\dfrac{\ell(b^m)}{\ell(b^n)}}\lambda^{+}_\delta .
\end{align*}
Similarly, we conclude a corresponding lower bound and obtain that for $n>m\ge n_1$,
\begin{equation}
    2^{-q(n-m)}\sqrt{\dfrac{\ell(b^m)}{\ell(b^n)}}\lambda^{-}_\delta  < \dfrac{ \beta_m }{s_n}< 2^{-q(n-m)}\sqrt{\dfrac{\ell(b^m)}{\ell(b^n)}}\lambda^{+}_\delta .\label{qk}
\end{equation}

 The Potter bounds (see, e.g., Theorem 1.5.6 in \cite{BinghamGoldieTeugels})  yield that there is $n_2\ge n_1$ such that for $n\ge m\ge n_2$,
\begin{equation*}
(1-\delta)2^{-2\delta(n-m)}=(1-\delta)b^{-2\delta(n-m)\log_b2}\le \frac{\ell(b^n)}{\ell(b^m)}\le (1+\delta)b^{2\delta(n-m)\log_b2}=(1+\delta)2^{2\delta(n-m)}.
\end{equation*}
With \eqref{qk}, we obtain that for $n\ge n_2+m$,
\begin{align}
2^{-(q+\delta)m}\frac{\lambda^{-}_\delta }{\sqrt{1+\delta}}<\dfrac{ \beta_{n-m} }{s_n}<2^{-(q-\delta)m}\frac{\lambda^{+}_\delta }{\sqrt{1-\delta}}.\label{qk2}
\end{align}

Note that
\[
Z^{(n)}:=\lambda\sum_{m=1}^n 2^{-qm}Y_m\lra Z\qquad\text{in $L^p$.}
\]
By Minkowski's inequality and with $c:=\sup_m\|Y_m\|_p<\infty$,
\begin{align}
\bigg\|\frac{Z_n}{s_n}-Z^{(n)}\bigg\|_p&\le \sum_{m=1}^{n}\left|\frac{\beta_{n-m}}{s_n}-\lambda 2^{-qm}\right|\|Y_m\|_p\le c\sum_{m=1}^{n-n_2}\left|\frac{\beta_{n-m}}{s_n}-\lambda 2^{-qm}\right|+c\!\!\!\!\!\!\sum_{m=n-n_2+1}^n\left|\frac{\beta_{n-m}}{s_n}-\lambda 2^{-qm}\right|.\label{sum split eq}
\end{align}
To deal with the first sum on the right-hand side, we use  \eqref{lambdapm eq} and \eqref{qk2} to get that for $m\le n-n_2$,
\begin{align*}
\frac{\beta_{n-m}}{s_n}-\lambda 2^{-qm}&\le 2^{-(q-\delta)m} \bigg(\frac{\lambda_\delta^+}{\sqrt{1-\delta}}-\lambda\bigg)+\lambda(2^{-(q-\delta)m}-2^{-qm})\\
&\le \eps  2^{-(q-\delta)m}+\lambda 2^{-(q-\delta)m}(1-2^{-\delta m}).
\end{align*}
In the same way, we get the lower bound
$$ -\eps 2^{-(q+\delta)m}-\lambda 2^{-qm}(1-2^{-\delta m})\le \frac{\beta_{n-m}}{s_n}-\lambda 2^{-qm}.
$$
Using \eqref{1-2deltam eq} and \eqref{lambda times sum eq}, we thus get the following estimate for the first sum on the right-hand side of \eqref{sum split eq}
\begin{align*}
\sum_{m=1}^{n-n_2}\left|\frac{\beta_{n-m}}{s_n}-\lambda 2^{-qm}\right|\le \eps \sum_{m=1}^{n-n_2}2^{-(q-\delta)m}+\lambda \sum_{m=1}^{K}2^{-(q-\delta)m} \eps+\lambda \sum_{m=K+1}^{\infty}2^{-(q-\delta)m} \le \eps c',\end{align*}
 where $c':=2\sum_{m=0}^\infty 2^{-(q-\delta_0)m}+1$. For the right-most sum in \eqref{sum split eq}, we have 
\begin{equation}
\begin{split}
\sum_{m=n-n_2+1}^n\left|\frac{\beta_{n-m}}{s_n}-\lambda 2^{-qm}\right|&\le \sum_{m=n-n_2+1}^n\frac{\beta_{n-m}}{s_n}+\lambda\sum_{m=n-n_2+1}^n2^{-qm}\\
&\le\frac{n_2}{s_n}\max_{i=0,\dots,n_2-1}\beta_i+ \lambda 2^{- qn}\frac{2^{
   qn_2}-1}{2^q-1}.\label{n}
\end{split}
\end{equation}
We will show below that our assumptions imply that  $s_n\to\infty$. Therefore, the right-hand side of \eqref{n} can be made smaller than $\eps$ by choosing $n$ sufficiently large. Thus, we see that the $L^p$-distance between $Z_n/s_n$ and $Z^{(n)}$ is smaller than  $c(c'+1)\eps$ if $n$ is sufficiently large.
 
 Finally, we argue that $s_n\to\infty$ if \eqref{snL2n condition2} holds and $q>0$. To this end, let $g(x):=x^{2q}\ell(x)$, so that the denominator in \eqref{snL2n condition2} is equal to $g(2^n)$. Proposition 1.3.6 in \cite{BinghamGoldieTeugels} states that $g(x)\to\infty$, which in view of \eqref{snL2n condition2} yields our claim.\end{proof}

Now we briefly discuss the case where $q=0$ in \eqref{snL2n condition2}. In that case, we are typically in the regime of the central limit theorem. This is suggested, for instance, by \cite{ZhaoWoodroofeVolny}, where a central limit theorem for reversible Markov chains was obtained under the condition that the variance, $\sigma_n^2$, satisfies $\sigma^2_n=n\ell(n)$ for a slowly varying function $\ell$. Since $g(x):=x\ell(x)$ is regularly varying with index $\varrho=1$, we see that, in our situation,  this condition is a special case of condition \eqref{Thm_eq_1} with $q=0$. 
In the following proposition, we formulate  implications between stronger and weaker versions of  condition \eqref{snL2n condition2} with $q=0$.

Now we are going to formulate a simple central limit theorem  in the form  needed in the next section. We believe that this result is well known but were unable to find an exact reference to the literature. Due to the large number of existing variants of the central limit theorem, we  confine ourselves here to the basic case that will  actually be needed in \Cref{proofs section}.

\begin{lemma}\label{CLT prop}
Suppose that $Y_1,Y_2,\dots$ is an i.i.d.~sequence of symmetric $\{-1,+1\}$-valued Bernoulli random variables  
%and that condition {\rm(\ref{CLT cond prop e})}  in \Cref{CLT cond prop} holds, i.e.,
such that $s_{n-1}^2/s_n^2\to1$ as $n\ua\infty$.
 Then, for every $p\in[1,\infty)$,  the laws of $\frac1{s_n}Z_n$ converge in the $L^p$-Wasserstein metric to the standard normal distribution. \end{lemma}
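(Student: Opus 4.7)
The plan is first to establish weak convergence of $W_n := Z_n/s_n$ to the standard normal law via a triangular-array central limit theorem, and then to upgrade this to $L^p$-Wasserstein convergence by bounding higher moments of $W_n$ uniformly in $n$. Writing $W_n = \sum_{m=1}^n a_{n,m} Y_m$ with $a_{n,m} := \beta_{n-m}/s_n$, the construction ensures $\sum_{m=1}^n a_{n,m}^2 = 1$, and symmetry of the $Y_m$ gives $\bE[W_n]=0$ and $\var(W_n)=1$.

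The first step is to verify the Lindeberg condition for the triangular array $(a_{n,m}Y_m)_{1\le m\le n}$. Since $|Y_m|\equiv 1$, this condition is equivalent to $\max_{0\le k\le n-1}\beta_k^2/s_n^2\to 0$. The hypothesis $s_{n-1}^2/s_n^2\to 1$ is exactly $\beta_{n-1}^2/s_n^2\to 0$; under the monotonicity of the coefficient sequence $\{\beta_m\}$ that is inherited from the setup in the intended application, the maximum is attained at $k=n-1$ and the Lindeberg condition follows at once. The Lindeberg--Feller theorem then yields $W_n\Rightarrow \mathcal{N}(0,1)$.

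Next, because $W_n$ is a Rademacher sum with $\sum_m a_{n,m}^2=1$, Khintchine's inequality provides, for each $r\ge 1$, a constant $C_r<\infty$ such that $\|W_n\|_r\le C_r$ uniformly in $n$. In particular $\{|W_n|^p\}_{n\in\bN}$ is bounded in $L^{p+1}$ and hence uniformly integrable, so the weak convergence from the previous step upgrades to $\bE[|W_n|^p]\to\bE[|W|^p]$ for $W\sim\mathcal{N}(0,1)$. Combined with weak convergence, this is exactly the standard characterisation of convergence of the laws of $W_n$ to $\mathcal{N}(0,1)$ in $L^p$-Wasserstein distance, completing the proof.

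The only genuinely delicate point is verifying the Lindeberg condition: the hypothesis $s_{n-1}^2/s_n^2\to 1$ directly controls only the single weight $a_{n,1}=\beta_{n-1}/s_n$, not the maximum of all $|a_{n,m}|$, so one must rely on the monotonicity of $\{\beta_m\}$ supplied by the setup of \Cref{Phi variation section}. Once this reduction is in hand the remainder is routine, since Khintchine's inequality supplies uniform moment control and the passage from weak convergence together with uniform integrability of $|W_n|^p$ to $L^p$-Wasserstein convergence is a well-known general fact.
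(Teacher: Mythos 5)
Your overall route coincides with the paper's: a Lindeberg--Feller argument for the triangular array $a_{n,m}Y_m$ to get weak convergence to $N(0,1)$, followed by uniform moment bounds for the Rademacher sums and uniform integrability to upgrade to $L^p$-Wasserstein convergence (the paper obtains the uniform moments from the sub-Gaussian bound $\log\cosh x\le x^2/2$ and a citation to F\"ollmer--Schied rather than Khintchine; that difference is cosmetic). The problem lies in the one step you yourself flag as delicate. You verify the Lindeberg condition by invoking ``monotonicity of the coefficient sequence $\{\beta_m\}$ inherited from the setup in the intended application.'' No such monotonicity is assumed in the lemma, and none is available in the application: there $\beta_m=2^m\alpha_m$ with $\{\alpha_m\}$ only required to be absolutely summable and to make $s_n^2$ satisfy \eqref{Thm_eq_1} with $q=0$, so $\beta_m^2$ behaves like increments of a slowly varying function and may oscillate; nothing forces $\max_{0\le k\le n-1}\beta_k^2$ to sit at $k=n-1$. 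As written, the reduction of $\max_{0\le k\le n-1}\beta_k^2/s_n^2\to0$ to the hypothesis $\beta_{n-1}^2/s_n^2\to0$ is therefore unsupported.

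The step can be repaired without any monotonicity, which is presumably what the paper means by ``easy to see'': since $s_n^2$ is nondecreasing, for $K\le k\le n-1$ one has $\beta_k^2/s_n^2\le\beta_k^2/s_{k+1}^2=1-s_k^2/s_{k+1}^2<\eps$ once $K$ is chosen using the hypothesis $s_{n-1}^2/s_n^2\to1$, while the finitely many indices $k<K$ are handled by $s_n\to\infty$. Note that $s_n\to\infty$ is genuinely needed and cannot be extracted from the ratio hypothesis alone: if $s_n^2$ converges to a finite positive limit, then $s_{n-1}^2/s_n^2\to1$ holds trivially, yet $Z_n/s_n$ converges in law to a (non-Gaussian) Bernoulli-convolution-type limit, so the Lindeberg condition fails. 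In the paper this is harmless because the lemma is only applied under the assumption $s_n^2\to\infty$ of \Cref{Theorem on Phi-variation}(b), but your write-up should state and use this explicitly in place of the monotonicity appeal. With that replacement, the remainder of your argument (Khintchine bounds, uniform integrability of $|W_n|^p$, and the standard characterisation of $L^p$-Wasserstein convergence as weak convergence plus convergence of $p$-th moments) is correct and matches the paper's proof.
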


\noindent\emph{Proof.}  As $s_{n-1}^2/s_n^2\to1$, it then follows that
\begin{align*}
\lim_{n\ua\infty}\frac{\beta_n^2}{s^2_n}&=\lim_{n\ua\infty}\Big(1-\frac{s_{n-1}^2}{s^2_n}\Big)=0.
\end{align*}
It is hence easy to see that the sequence  $ \{Z_n\}_{n \in \bN_0}$ satisfies Lindeberg's condition, which yields that $\frac1{s_n}Z_n$ converges in law to $N(0,1)$. Thus, to conclude convergence in the $L^p$-Wasserstein metric, it is sufficient to show that $\bE[|s_n^{-1}Z_n|^q]$ is uniformly bounded in $n$ for some $q>p$; this follows, e.g.,  from  Lemma 5.61 and Corollary A.50 in \cite{FoellmerSchiedBuch}.
We actually show a stronger condition, namely  that $\bE[e^{|\lambda Z_n/s_n|}]$ is uniformly bounded for all $\lambda\in\bR$. Since $e^{|x|}\le e^x+e^{-x}$, we can drop the absolute value in the exponent. Then,
\begin{align*}
\log \bE[e^{\lambda Z_n/s_n}]=\log\prod_{k=0}^n\cosh(\lambda\beta_k/s_n)=\sum_{k=0}^n\log\cosh(\lambda\beta_k/s_n)\le \sum_{k=0}^n\frac{\lambda^2\beta_k^2}{2s_n^2}=\frac{\lambda^2}2.~~~~~~~~~~~~~~{\qedsymbol}
\end{align*}

\section{$\Phi$-variation of functions in the Takagi class}\label{Phi variation section}

In this section, we will apply the limit theorems from \Cref{limit section}
 to the problem of computing the $\Phi$-variation of functions in the Takagi class. To this end,
let $\Phi: [0,\infty)\to[0,\infty)$ be a  function with $\Phi(0)=0$. The \emph{$\Phi$-variation of a continuous function $f:[0,1]\to\bR$ along the sequence of dyadic partitions} is defined as 
$$\<f\>^\Phi_t:=\lim_{n\uparrow\infty}\sum_{k=0}^{\lfloor t2^n\rfloor}\Phi\big(|f((k+1)2^{-n})-f(k2^{-n})|\big),\qquad 0\le t\le 1,
$$
provided that  the limit exists for all $t\in[0,1]$. Due to the uniform continuity of $f$, we may, without loss of generality, restrict $\Phi$ to the domain $[0,1)$. For $\Phi(x)=x$ and $t=1$, we obtain the total variation of $f$. For $\Phi(x)=x^2$, we obtain the usual quadratic variation, and for $\Phi(x)=x^p$ with $p\ge1$ the $p^{\text{th}}$ variation. Here, we are interested in functions for which the correct $\Phi$ may   \emph{not} be of the form $\Phi(x)=x^p$ but rather has a more complex structure. 

Let $\varphi(t)=\min_{z\in\bZ}|z-t|$ denote the tent map. The Takagi class, as introduced by Hata and Yamaguti \cite{HataYamaguti}, consists of all  functions $f:[0,1]\to \bR$ that admit an expansion of the form 
\begin{align}\label{vdw}
f(t):=\sum_{m=0}^\infty \alpha_m\varphi(2^mt),\qquad t\in[0,1],
\end{align}
where   $\{\alpha_m\}_{m \in \bN_0}$  is a sequence of real numbers for which the series $\sum_{m=0}^\infty\alpha_m$ converges absolutely.  Clearly, the series on the right-hand side of \eqref{vdw} converges  uniformly in $t\in[0,1]$, so that $f$ is indeed a well defined continuous function. Typically, the functions in the Takagi class have a fractal structure. For instance, the choice $\alpha_m=2^{-m}$ corresponds to the classical, nowhere differentiable Takagi function, which was originally introduced in \cite{Takagi} but rediscovered many times. More generally, the choice $\alpha_m=a^m$ for some $a\in(-1,1)$ yields the class of so-called Takagi--Landsberg functions.  Analytical properties of the functions in the Takagi class, such as differentiability, are discussed in \cite{Kono}.

In our first result, we take  $f$ as in \eqref{vdw} as given and relate the asymptotic behavior of 
\begin{equation*}
  V_n^{(p)}:= \sum_{k = 0}^{ 2^n-1}|f((k+1) 2^{-n}) - f(k 2^{-n})|^p,\qquad p\ge1, \  n\in\bN,
\end{equation*}
to the one of
\begin{equation}\label{eq sn}
    s_n^2:=\sum_{m=0}^{n-1}\alpha_m^24^m,\qquad n\in\bN.
\end{equation}
Note that $V_n^{(1)}$ converges to the total variation of $f$,  defined as usual through taking the supremum over \emph{all} possible partitions of $[0,1]$ (see, e.g.,  Theorem 2 in \S5 of Chapter VIII in~\cite{Natanson}).  The following proposition provides a priori bounds on the behavior of $V_n^{(p)}$, based on the range of possible growth rates of $s_n$.

\begin{proposition}\label{Prop Khine}
Let 
\begin{equation*}
   q_* := \liminf_{n \ua \infty}\frac{1}{n}\log_2 s_n\qquad\text{and}\qquad  q^* := \limsup_{n \ua \infty}\frac{1}{n}\log_2 s_n.
\end{equation*}
Then, for $p\ge 1$, we have $\liminf_nV^{(p)}_n=0$ if $p(1-q_*)>1$ and  $\limsup_nV_n^{(p)}=\infty$  if $p(1-q^*)<1$.
\end{proposition}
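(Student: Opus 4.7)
The plan is to identify each dyadic increment of $f$ as a signed sum whose signs enumerate all of $\{-1,+1\}^n$, reduce $V_n^{(p)}$ to an absolute moment of a Rademacher sum, and conclude by Khintchine's inequality.

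First, I would analyze $\Delta_{n,k}:=f((k+1)2^{-n})-f(k2^{-n})$. For $m\ge n$ the number $2^m\cdot j\,2^{-n}$ is always an integer, so $\varphi(2^m\cdot)$ vanishes at every dyadic point of level $n$ and contributes nothing to $\Delta_{n,k}$. For $m<n$, the breakpoints of $\varphi(2^m\cdot)$ lie at multiples of $2^{-(m+1)}$ and hence never fall strictly inside $(k2^{-n},(k+1)2^{-n})$; on that interval $\varphi(2^m\cdot)$ is linear with slope $\pm 2^m$, and writing $\theta_m^{(k)}\in\{-1,+1\}$ for the sign of that slope yields the key identity
$$\Delta_{n,k}=2^{-n}\sum_{m=0}^{n-1}\alpha_m 2^m\,\theta_m^{(k)}.$$
The structural observation is that $k\mapsto(\theta_0^{(k)},\dots,\theta_{n-1}^{(k)})$ is a bijection from $\{0,1,\dots,2^n-1\}$ onto $\{-1,+1\}^n$, which can be verified by a short induction on $n$ (or read off the binary digits of $k$). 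Consequently
$$V_n^{(p)}=2^{n(1-p)}\,\bE\bigg[\Big|\sum_{m=0}^{n-1}\alpha_m 2^m\eps_m\Big|^p\bigg],$$
where $(\eps_m)$ is an i.i.d.\ sequence of symmetric Rademacher signs.

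Since the Rademacher sum has variance $s_n^2$, Khintchine's inequality provides constants $0<A_p\le B_p$ depending only on $p$ such that
$$A_p^p\cdot 2^{n(1-p)}\,s_n^p\;\le\;V_n^{(p)}\;\le\;B_p^p\cdot 2^{n(1-p)}\,s_n^p.$$
Taking $\log_2$, dividing by $n$, and passing to lim inf and lim sup, the multiplicative constants disappear and one obtains
$$\liminf_n\tfrac1n\log_2 V_n^{(p)}=1-p(1-q_*),\qquad \limsup_n\tfrac1n\log_2 V_n^{(p)}=1-p(1-q^*).$$
Under the hypothesis $p(1-q_*)>1$ the first right-hand side is strictly negative, so $V_n^{(p)}$ decays exponentially along a subsequence and $\liminf_n V_n^{(p)}=0$; under $p(1-q^*)<1$ the second is strictly positive, so $V_n^{(p)}$ grows exponentially along a subsequence and $\limsup_n V_n^{(p)}=\infty$.

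The main obstacle is the clean identification of $\Delta_{n,k}$ as a Rademacher sum together with the sign-pattern bijection; once that is in hand, the reduction to an expectation, the two-sided Khintchine bound, and the translation into growth rates of $s_n$ are all routine.
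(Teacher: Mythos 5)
Your proof is correct and takes essentially the same route as the paper: the paper's argument derives the identical identity $V_n^{(p)}=2^{n(1-p)}\bE\big[|Z_n|^p\big]$ from its probabilistic representation of dyadic increments (a uniformly chosen dyadic point makes the slope signs i.i.d.\ Rademacher, which is exactly your sign-pattern bijection) and then applies Khintchine's inequality to obtain the two-sided bound $A_p\,2^{n(1-p)}s_n^p\le V_n^{(p)}\le B_p\,2^{n(1-p)}s_n^p$. Your final comparison of growth rates via $q_*$ and $q^*$ is precisely the step the paper leaves as ``straightforward,'' so nothing is missing.
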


\begin{example}Faber \cite{Faber07} considered the  function $f$ with 
$$\alpha_m=\begin{cases}\frac1{10^n}&\text{if $m=n!$ for some $n\in\bN$,}\\
0&\text{otherwise,}
\end{cases}
$$
and showed that it is nowhere differentiable. It is easy to see that this choice leads to $q^*\ge1$ and $q_*\le0$, which implies that $\liminf_nV^{(p)}_n=0$ and $\limsup_nV^{(p)}_n=\infty$ for all $p\ge1$. 
\end{example}

A more interesting case is the situation in which $f$ is such that $q:=\lim_n\frac1n\log_2s_n$ exists and satisfies $0\le q<1$. Then \Cref{Prop Khine} implies that $p:=1/(1-q)$ is the critical exponent for the power variation of $f$. The standard paradigm in the literature is to expect that $f$ then admits a finite $p^{\text{th}}$ variation. This is true for the typical sample paths of a continuous semimartingale with $p=2$ and for the trajectories of fractional Brownian motion with Hurst\label{Hurst} exponent $H=1/p$. As shown in \cite{MishuraSchied2,SchiedZZhang}, it is also true for the Takagi--Landsberg functions.   In the sequel, we shall in particular investigate the situation in which $\lim_nV_n^{(p)}$ exists but is either zero or infinity. In this case, $p^{\text{th}}$ power variation is clearly no longer sufficient for characterizing the exact \lq\lq roughness" of $f$.

 To state our first main result,  we recall that Corollary 1.4.2 in \cite{BinghamGoldieTeugels} allows us to assume without loss of generality that all regularly varying functions  are  bounded on compact intervals (see the footnote on page~\pageref{reg varying footnote} for the definition of a regularly varying function). If $g$ is  regularly varying  and $q\in[0,1)$, consider  
\begin{equation}\label{Phiq eq}
    \Phi_q(x)=\Phi_{q,g}(x):= x^{\frac{1}{1-q}}g\Big(\dfrac{-\log_2 x}{1-q}\Big)^{-\frac{1}{2(1-q)}}
\end{equation}
for $0<x<1$.
By Proposition 1.5.7  in \cite{BinghamGoldieTeugels}, $y\mapsto g((\log_2y)/(1-q))$ is slowly varying, and so $\psi_q(y):=\Phi_q(1/y)$ is regularly varying with index $-1/(1-q)$. We conclude that $\Phi_q(x)\to0$ as $x\da0$. It therefore makes sense to extend $\Phi_q$ to the domain $[0,1)$ by setting $\Phi_q(0):=0$. Note also that $\Phi_q$ is \emph{slowly varying at zero} with index $1/(1-q)$ and that the functions arising in this manner form a very wide class of functions that are regularly varying at zero.

\begin{theorem}\label{Theorem on Phi-variation}
For $f$ as in \eqref{vdw}, let $s_n^2$ be as in \eqref{eq sn}
and suppose that there exist $q\in [0,1)$ and a regularly varying  function $g$ such that \begin{equation}\label{Thm_eq_1}
    \lim_{n \ua \infty}\dfrac{s_n^2}{2^{2qn}g(n)} = 1.
\end{equation}
Let moreover  $\{Y_m\}_{m \in \bN}$ be an  i.i.d.~sequence of symmetric $\{-1,+1\}$-valued Bernoulli random variables  and $\Phi_q$ as in \eqref{Phiq eq}.
Then the following hold:
\begin{enumerate}
\item The function $f$ is of bounded variation if and only if $\lim_ns_n^2<\infty$ and, in this case, the total variation of $f$ is equal to 
$\bE[|\wt Z|]$, where $\wt Z:=\sum_{m=0}^\infty \alpha_m2^mY_{m+1}$. 
\item If $s_n^2\to\infty$ and $q=0$, then the $\Phi_0$-variation of $f$ is given by
\begin{equation}\label{Phi0 variation eq}
    \<f\>^{\Phi_0}_t = \sqrt{\frac{2}{\pi}}\cdot t.
\end{equation}
\item If $q>0$, then  the $\Phi_q$-variation of $f$ is given by 
\begin{equation}\label{Bernoulli thm c eqn}
    \<f\>_t^{\Phi_q} = \bE[|Z|^{\frac{1}{1-q}}]\cdot t\qquad\text{where}\qquad Z:=\sqrt{2^{2q}-1}\sum_{m=1}^\infty 2^{-qm}Y_m.
\end{equation}
\end{enumerate}

\end{theorem}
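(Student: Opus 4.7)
The plan is to combine the Faber--Schauder-like structure of the dyadic increments of $f$ with the limit theorems of \Cref{limit section}. Since $\varphi(2^m \cdot)$ vanishes at all integer multiples of $2^{-m}$ (so contributes nothing to increments at dyadic level $n$ when $m\ge n$) and is linear with slope $\epsilon_{m,k}2^m$ on $[k2^{-n},(k+1)2^{-n}]$ for some $\epsilon_{m,k}\in\{-1,+1\}$ when $m<n$, the increment decomposes as
\[
2^n\bigl[f((k+1)2^{-n})-f(k2^{-n})\bigr]=\sum_{m=0}^{n-1}\alpha_m 2^m\,\epsilon_{m,k}.
\]
The key combinatorial observation is that as $k$ runs through $\{0,\ldots,2^n-1\}$, the vector $(\epsilon_{0,k},\ldots,\epsilon_{n-1,k})$ enumerates $\{-1,+1\}^n$ bijectively. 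Setting $\beta_m:=\alpha_m 2^m$ and $Z_n:=\sum_{m=0}^{n-1}\beta_m Y_{m+1}\dd\sum_{m=1}^n \beta_{n-m}Y_m$, I obtain the probabilistic representation
\[
\sum_{k=0}^{2^n-1}\Phi_q\bigl(|f((k+1)2^{-n})-f(k2^{-n})|\bigr)=2^n\,\bE\bigl[\Phi_q(2^{-n}|Z_n|)\bigr].
\]

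For $t=1$ and $q>0$, I would feed \eqref{Thm_eq_1} into \Cref{conv thm} with $b=2$ and $\ell(x):=g(\log_2 x)$ (slowly varying since $g$ is regularly varying), obtaining $Z_n/s_n\to Z$ in every $L^p$. Substituting the definition of $\Phi_q$ and using $p(1-q)=1$ with $p:=1/(1-q)$ gives the identity
\[
2^n\Phi_q(2^{-n}|Z_n|)=\bigl(s_n^2/2^{2qn}\bigr)^{p/2}\,|Z_n/s_n|^{p}\,g\bigl((n-\log_2|Z_n|)/(1-q)\bigr)^{-p/2}.
\]
Hypothesis \eqref{Thm_eq_1} turns the first factor into $g(n)^{p/2}(1+o(1))$; since $n-\log_2|Z_n|=(1-q)n+O_{\bP}(1)$, Potter's bounds render the last factor asymptotic to $g(n)^{-p/2}$, and the two $g(n)$-powers cancel, so that $2^n\Phi_q(2^{-n}|Z_n|)\to|Z|^{p}$ in probability. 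A uniform-integrability argument --- using the $L^p$-convergence of $Z_n/s_n$ for every $p$ and the sub-Gaussian bound $\bE[\exp(\lambda Z_n/s_n)]\le\exp(\lambda^2/2)$ appearing in the proof of \Cref{CLT prop} --- then yields $V_n^{\Phi_q}(1)\to\bE[|Z|^{1/(1-q)}]$. For $q=0$ with $s_n\to\infty$, the same scheme applies via \Cref{CLT prop} with $Z\sim N(0,1)$, producing the prefactor $\bE[|N(0,1)|]=\sqrt{2/\pi}$.

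To promote $t=1$ to \emph{linear} dependence on $t$, I would exploit the self-similarity of $f$: on a dyadic interval $[j2^{-N},(j+1)2^{-N}]$, $f$ equals an affine function of bounded slope plus $\tilde f(u):=\sum_{m\ge N}\alpha_m\varphi(2^m u)$, and the rescaling $\tilde f^*(v):=\tilde f(v/2^N)=\sum_{l\ge 0}\alpha_{N+l}\varphi(2^l v)$ is again in the Takagi class, with $(s_M^*)^2\sim 2^{-2(1-q)N}\cdot 2^{2qM}g(M)$. Applying the $t=1$ analysis to $\tilde f^*$ at dyadic level $n-N$ and tracking the rescaling prefactor $(2^{-2(1-q)N})^{p/2}=2^{-N}$, each of the $i$ subintervals of $[0,i/2^N]$ contributes $2^{-N}\bE[|Z|^{1/(1-q)}]$, and summation yields the claim for dyadic $t$; monotonicity of $V_n^{\Phi_q}(t)$ in $t$ extends to all $t\in[0,1]$. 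The affine correction of order $2^{-n}$ is negligible against the typical increment $\sim 2^{-(1-q)n}g(n)^{1/2}$, via a quantitative modulus of continuity for $\Phi_q$. Part (a) follows from the same representation: when $\lim_n s_n^2<\infty$, Kahane--Khintchine gives $L^1$-convergence of $\wt Z_n:=\sum_{m=0}^{n-1}\alpha_m 2^m Y_{m+1}$ to $\wt Z$, whence $V_n^{(1)}(1)=\bE|\wt Z_n|\to\bE|\wt Z|$ equals the total variation; when $s_n\to\infty$, Khintchine's inequality forces $\bE|\wt Z_n|\ge c\,s_n\to\infty$.

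The main obstacle is the uniform cancellation of the regularly varying $g$-factors in the displayed identity above. This cancellation must persist even on the tail events where $|Z_n|/s_n$ is either very small (so $n-\log_2|Z_n|$ is atypically large) or very large, and obtaining it requires combining Potter's uniform bounds for regular variation with moment and exponential estimates on $Z_n/s_n$ of the sort provided by \Cref{conv thm} and \Cref{CLT prop}. A secondary technical point is making the affine-perturbation argument in the dyadic decomposition quantitative in a way that is compatible with the slowly varying factor in $\Phi_q$.
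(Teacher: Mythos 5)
Your skeleton coincides with the paper's own proof in its core: the bijection between dyadic increments and sign vectors is exactly the paper's representation $V_n=2^n\bE\big[\Phi_q(2^{-n}|Z_n|)\big]$ (there phrased via a uniformly distributed dyadic point $R_n$ and the induced i.i.d.\ signs $Y_m$), and for $t=1$ the reduction to \Cref{conv thm} (for $q>0$) and to \Cref{CLT prop} (for $q=0$) is the same. Your route to linearity in $t$ is genuinely different: the paper stays inside the probabilistic picture, restricting the expectation to $\{2^{-n}R_n\le t\}$ and decoupling $Z_{m,n}$ (the sum with the top $m$ coefficients removed) from the digits $R_{m,n}$, whereas you rescale $f$ on dyadic subintervals to a Takagi-class function with coefficients $\alpha_{N+l}$; your asymptotics $(s^*_M)^2\sim 2^{-2(1-q)N}2^{2qM}g(M)$ and the $2^{-N}$ bookkeeping per subinterval are correct, so this alternative is viable in principle. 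Likewise, in part (a) your Khintchine bound $\bE|\wt Z_n|\ge c\,s_n$ gives the ``only if'' direction more directly than the paper, which defers it to parts (b)--(c).

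The genuine gap is the step you yourself label ``the main obstacle'': you assert $2^n\Phi_q(2^{-n}|Z_n|)\to|Z|^{1/(1-q)}$ in probability plus uniform integrability, but this is where the analytic content of the theorem lies, and the needed ingredients are missing. Concretely: (i) the estimate $n-\log_2|Z_n|=(1-q)n+O_{\bP}(1)$ requires $|Z_n|/s_n$ to stay bounded away from $0$ with high probability, which in turn requires that the limiting Bernoulli convolution has no atom at $0$; the paper invokes Dutkay--Jorgensen for the atomlessness of the law of $Z$, an ingredient absent from your proposal. (ii) For the upper bound, uniform integrability of $|Z_n/s_n|^p$ is not the issue (for $q>0$ the convergence is even in $L^\infty$); the problem is that $\Phi_q$ need not be monotone and the $g$-factor is uncontrolled on small-increment events, so one cannot simply pass perturbations (such as the affine correction in your rescaling step) through $\Phi_q$. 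The paper handles this by constructing a monotone regularly varying majorant $\Psi\ge\Phi_q$ with $\Psi/\Phi_q\to1$ at $0$ (Theorem 1.8.2 in Bingham--Goldie--Teugels), shifting the argument by $\eps$, and combining Fatou's lemma for the lower bound with dominated convergence and the uniform convergence theorem for the upper bound. (iii) For $q=0$ your ``same scheme'' cannot be run as stated: $Z_n/s_n$ converges only in law (Wasserstein), not to a fixed random variable, so ``convergence in probability'' of $2^n\Phi_0(2^{-n}|Z_n|)$ is not even well posed; the paper instead proves separate lower and upper bounds, truncating on $\{|Z_n|/\psi(n)\ge\eps\}$ and again using the majorant. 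As written, the proposal stops exactly where the proof becomes nontrivial.
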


 \begin{remark}The random variable $Z$ in \eqref{Bernoulli thm c eqn} can be represented as 
 $$Z=\sqrt{1-2^{-2q}}\sum_{m=0}^\infty 2^{-qm}Y_{m+1}
 $$
 and hence has as its law a scaled version of the infinite Bernoulli convolution with parameter $2^{-q}$. These laws exhibit some fascinating properties and have been studied in their own right for many decades; see, e.g.,~\cite{PeresSchlagSolomyak} and the references therein.
 \end{remark}
 
In the next two results, we consider again the situation of \Cref{Prop Khine} and take a closer look at what happens to the power variation of a function $f$ in the Takagi class satisfying \eqref{Thm_eq_1}.
 
 \begin{corollary}\label{g limit cor} Suppose that \eqref{Thm_eq_1}
 holds with $q\in[0,1)$ and let $p:=1/(1-q)$.
 \begin{enumerate}
  \item The function $f$ has infinite $r^{\text{th}}$ variation for $1\le r<p$ and vanishing $r^{\text{th}}$  variation for $r> p$.
 \item  If $\lim_ng(n)=c$ for some number $c\in[0,\infty]$, then the $p^{\text{th}}$ variation of $f$ satisfies $\<f\>^{(p)}_t=c^{p/2}\<f\>^{\Phi_q}_t$ for $t\in[0,1]$.
 \end{enumerate}
 \end{corollary}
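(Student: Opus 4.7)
The plan is to derive both parts from \Cref{Theorem on Phi-variation} applied to $\Phi_q$, combined with two elementary inputs. First, the uniform continuity of $f$ on $[0,1]$ gives $\omega_n(f):=\max_{0\le k<2^n}|\Delta_{n,k}f|\lra 0$, where $\Delta_{n,k}f:=f((k+1)2^{-n})-f(k2^{-n})$. Second, $\Phi_q$ is regularly varying at zero with index $p$, so the Potter bounds applied to the slowly varying factor $x\mapsto\Phi_q(x)/x^p$ yield, for every $\eps>0$, a $\delta_0>0$ such that
\[
x^{p+\eps}\le \Phi_q(x)\le x^{p-\eps}\qquad\text{for all }0<x\le\delta_0.
\]
Under the hypothesis \eqref{Thm_eq_1}, \Cref{Theorem on Phi-variation} furnishes the master limit $\sum_{k=0}^{\lfloor t2^n\rfloor}\Phi_q(|\Delta_{n,k}f|)\lra c_0\cdot t$ as $n\to\infty$, where $c_0>0$ is the appropriate constant from that theorem.

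For part (a), I would handle $1\le r<p$ by choosing $\eps>0$ with $r+\eps<p$ and taking $n$ large enough that $\omega_n(f)\le\delta_0$. The upper Potter bound, together with the monotonicity $x^{r-p+\eps}\ge \omega_n(f)^{r-p+\eps}$ on $(0,\omega_n(f)]$ (which holds because $r-p+\eps<0$), gives $|\Delta_{n,k}f|^r\ge \omega_n(f)^{r-p+\eps}\Phi_q(|\Delta_{n,k}f|)$. Summing over $k$ and combining the master limit with $\omega_n(f)^{r-p+\eps}\to\infty$ delivers $V^{(r)}_n\lra\infty$. The case $r>p$ is symmetric: with $\eps>0$ chosen so that $r-p-\eps>0$, the lower Potter bound produces $|\Delta_{n,k}f|^r\le \omega_n(f)^{r-p-\eps}\Phi_q(|\Delta_{n,k}f|)$, and summing yields $V^{(r)}_n\lra 0$.

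For part (b), I would use the identity $\Phi_q(x)/x^p=g(-p\log_2 x)^{-p/2}$, valid for $x\in(0,1)$, to translate the hypothesis $g(n)\to c$ into the pointwise limit $\Phi_q(x)/x^p\to c^{-p/2}$ as $x\downarrow 0$, with the natural conventions $c^{-p/2}:=+\infty$ for $c=0$ and $c^{-p/2}:=0$ for $c=\infty$. When $c\in(0,\infty)$, given $\eps>0$ one finds $\delta_0>0$ such that $(c^{-p/2}-\eps)x^p\le\Phi_q(x)\le (c^{-p/2}+\eps)x^p$ on $(0,\delta_0]$; summing over the dyadic increments once $\omega_n(f)\le\delta_0$, invoking the master limit, and letting $\eps\downarrow 0$ yields $V^{(p)}_n\lra c^{p/2}c_0 t$. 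The boundary cases $c=0$ and $c=\infty$ are handled by the same sandwich with only the relevant one-sided bound: $\Phi_q(x)/x^p\to\infty$ forces $V^{(p)}_n\lra 0$, while $\Phi_q(x)/x^p\to 0$ forces $V^{(p)}_n\lra\infty$, consistently with the conventions $0\cdot c_0=0$ and $\infty\cdot c_0=\infty$.

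The main step requiring care is the uniform passage from the pointwise behavior of $\Phi_q(x)/x^p$ near zero to a sandwich holding simultaneously for all $2^n$ dyadic increments at level $n$; this is exactly what the input $\omega_n(f)\to 0$ supplies. Beyond this, no genuinely new difficulty arises: all of the heavy lifting is done by \Cref{Theorem on Phi-variation} itself.
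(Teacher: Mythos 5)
Your argument is correct in substance, but it follows a genuinely different route from the paper's. For part (a), the paper does not touch \Cref{Theorem on Phi-variation} at all: it notes that \eqref{Thm_eq_1} forces $\frac1n\log_2 s_n\to q$ and then invokes \Cref{Prop Khine}, whose Khintchine-inequality bounds $A_r2^{n(1-r+rq_n)}\le V_n^{(r)}\le B_r2^{n(1-r+rq_n)}$ immediately give divergence for $r<p$ and vanishing for $r>p$. You instead compare $x^r$ with $\Phi_q(x)$ near zero (using that $\Phi_q(x)/x^p$ is slowly varying at $0$, so $x^{p+\eps}\le\Phi_q(x)\le x^{p-\eps}$ for small $x$) and multiply by the mesh $\omega_n(f)^{r-p\pm\eps}$; this is a clean, reusable observation (any function with nontrivial linear $\Phi_q$-variation has the stated power-variation dichotomy), but it leans on the master limit $\sum_k\Phi_q(|\Delta_{n,k}f|)\to c_0t$ with $0<c_0<\infty$, which \Cref{Theorem on Phi-variation} only supplies when $q>0$ or ($q=0$ and $s_n\to\infty$); in the residual subcase $q=0$, $s_n$ bounded (allowed by \eqref{Thm_eq_1}) your $r>p$ claim needs the one-line patch $V_n^{(r)}\le\omega_n^{r-1}V_n^{(1)}$ with $V_n^{(1)}$ bounded by the total variation from part (a) of the theorem — a boundary case the paper's own proof of (b) also glosses over, though its proof of (a) covers it automatically. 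For part (b), the paper handles $0<c<\infty$ by re-applying \Cref{Theorem on Phi-variation} with $g$ replaced by the constant $\wt g\equiv c$ (so that $\Phi_{q,\wt g}(x)=c^{-p/2}x^p$) and treats $c=0$, $c=\infty$ by a one-sided comparison; your uniform sandwich $(c^{-p/2}\mp\eps)x^p\lessgtr\Phi_q(x)$ handles all three cases at once and is essentially the paper's $c=0$ argument extended, at the small cost of having to pass from $g(n)\to c$ along integers to $g(y)\to c$ along the reals (via the uniform convergence theorem for slowly varying $g$, automatic when the index is nonzero), a step you state but do not justify. Net effect: the paper's route is slightly more robust at the degenerate edge, yours is more self-contained given the main theorem and makes the mechanism (regular variation of $\Phi_q$ at zero plus $\omega_n(f)\to0$) explicit.
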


If $g$ is regularly varying with index $\varrho\in\bR$, then $g(x)\to0$ for $\varrho<0$ and $g(x)\to\infty$ for $\varrho>0$ according to Proposition B.1.9 (1) in \cite{deHaanFerreira}. If $\varrho=0$, then $g$ is slowly varying and may or may not converge to a number  $c\in [0,\infty]$.
 In conjunction with \Cref{g limit cor}, we thus get immediately the following corollary.

\begin{corollary}\label{rho corollary} Suppose that \eqref{Thm_eq_1}
 holds with $q\in[0,1)$ and a function  $g$ that is regularly varying with index $\varrho\in\bR$.   We let $p:=1/(1-q)$.
 \begin{enumerate}
 \item If $\varrho<0$, then $f$ has vanishing $p^{\text{th}}$ variation.
 \item  If $\varrho>0$, then $f$ has infinite $p^{\text{th}}$ variation.
 \end{enumerate}
\end{corollary}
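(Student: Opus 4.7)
The plan is to deduce both parts directly from \Cref{g limit cor}(b) by inspecting $c:=\lim_{n\ua\infty}g(n)$. The only extra ingredient is the elementary dichotomy, recalled in the paragraph preceding the statement (Proposition B.1.9(1) in \cite{deHaanFerreira}), that a regularly varying function of nonzero index $\varrho$ tends to $0$ at infinity if $\varrho<0$ and to $+\infty$ if $\varrho>0$. In either scenario the extended limit $c$ exists in $[0,+\infty]$, so \Cref{g limit cor}(b) is applicable.

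For part (a), $\varrho<0$ gives $c=0$, whence the identity $\<f\>^{(p)}_t=c^{p/2}\<f\>^{\Phi_q}_t$ from \Cref{g limit cor}(b) reduces to $\<f\>^{(p)}_t=0$ for every $t\in[0,1]$, using that $\<f\>^{\Phi_q}_t<\infty$ by \Cref{Theorem on Phi-variation} and that $p/2>0$. This is precisely the claimed vanishing.

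For part (b), $\varrho>0$ yields $c=+\infty$, and \Cref{g limit cor}(b) gives $\<f\>^{(p)}_t=(+\infty)\cdot \<f\>^{\Phi_q}_t$. To turn this formal product into $+\infty$ for every $t\in(0,1]$, I would verify that $\<f\>^{\Phi_q}_t>0$ in each of the two cases allowed by the hypothesis: if $q>0$, \Cref{Theorem on Phi-variation}(c) identifies $\<f\>^{\Phi_q}_t=\bE[|Z|^{1/(1-q)}]\cdot t$ with $Z$ a nondegenerate random variable (since $\sqrt{2^{2q}-1}>0$), so the expectation is strictly positive; if $q=0$, the assumption together with $g(n)\to\infty$ forces $s_n^2\to\infty$, and \Cref{Theorem on Phi-variation}(b) gives $\<f\>^{\Phi_0}_t=\sqrt{2/\pi}\cdot t>0$ for $t>0$. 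Hence $f$ has infinite $p^{\text{th}}$ variation on $[0,1]$.

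I do not expect any substantive obstacle: the whole statement is a direct repackaging of \Cref{g limit cor}(b) together with the classical asymptotic behavior of regularly varying functions with nonzero index, the only minor point being the positivity of $\<f\>^{\Phi_q}_t$ used to pass from the formal $\infty\cdot\<f\>^{\Phi_q}_t$ to $+\infty$ in part (b), which is immediate from \Cref{Theorem on Phi-variation}.
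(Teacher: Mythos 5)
Your proposal is correct and coincides with the paper's own (implicit) argument: the corollary is stated as an immediate consequence of \Cref{g limit cor}(b) combined with the fact (Proposition B.1.9(1) in \cite{deHaanFerreira}) that $g(x)\to0$ for $\varrho<0$ and $g(x)\to\infty$ for $\varrho>0$. Your additional remarks on the finiteness and strict positivity of $\<f\>^{\Phi_q}_t$ via \Cref{Theorem on Phi-variation} merely make explicit what the paper leaves implicit.
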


\begin{example}\label{Takagi example}
Let us consider the classical Takagi function, 
\begin{equation*}
    f(t) = \sum_{m = 0}^{\infty} 2^{-m}\varphi(2^mt),
\end{equation*}
as discussed in the introduction. Then $s_n^2=n$ and so we can take $q=0$ and $g(x):=x$. Corollary \ref{rho corollary}  thus immediately yields the well-known fact that $f$ is of unbounded total variation. Moreover, we have 
$
    \Phi_0(t) = {x}/{\sqrt{-\log_2 x}}$, and so we recover here a special case of Theorem 1.2 in \cite{HanSchiedZhang1}.
\end{example}

Allaart \cite{Allaartflexible} introduced an extension of the Takagi class by multiplying each replicate of the tent map in the expansion \eqref{vdw}
 with an arbitrary sign. More precisely, he considered the class of functions of the form 
 \begin{equation}\label{flexible class}
 f(t):=\sum_{m=0}^\infty \alpha_m \sigma_m(t)\varphi(2^mt),\qquad t\in[0,1],
 \end{equation}
 where $\sigma_m:[0,1]\to\{-1,+1\}$ is  constant on each interval $[(k-1)2^{-m},k2^{-m})$ for $k=1,\dots,2^m$,  and $\{\alpha_m\}_{m \in \bN_0}$  is a sequence of real numbers for which the series  $\sum_{m}\alpha_m$ converges absolutely. Then $f$ is well-defined and continuous due to the uniform convergence of the series in \eqref{flexible class}. Certain deterministic choices for $\sigma_m$ yield fractal functions studied in other fields such as information theory; see \cite{Allaartflexible,AllaartKawamura} and the references therein. When the signs of $\sigma_m$ are chosen in a random manner, the expansion \eqref{flexible class} becomes closely related to  the Wiener--L\'evy--Ciesielski expansion of Brownian motion by means of the Faber--Schauder functions, and the functions \eqref{flexible class} with fixed coefficients  $\{\alpha_m\}_{m \in \bN_0}$ but random signs form a non-Gaussian stochastic process with rough sample paths.  More precisely,  for each $m$, let  $\{\xi_{m,k}\}_{k = 1,\cdots,2^m}$ be a sequence of $\{-1,+1\}$-valued Bernoulli random variables and define 
 \begin{equation}\label{stoch process}
 \sigma_m(t):=\sum_{k=1}^{2^m}\xi_{m,k}\Ind{[(k-1)2^{-m},k2^{-m})}(t)\qquad\text{and}\qquad X_t:=\sum_{m=0}^\infty \alpha_m \sigma_m(t)\varphi(2^mt).
 \end{equation}
 Then the following corollary states that $\{X_t\}_{t\in[0,1]}$ is a stochastic process whose sample paths have the deterministic linear $\Phi_q$-variation \eqref{Bernoulli thm c eqn} if the sequence  $\{\alpha_m\}_{m \in \bN_0}$ satisfies the conditions on \Cref{Theorem on Phi-variation}.
 An illustration of this stochastic process is provided in \Cref{rho figure}. For the choice $\alpha_m=a^m$ with $a\in(-1,1)$, the $p^{\text{th}}$ variation of the functions \eqref{flexible class} was studied in \cite{MishuraSchied2,SchiedTakagi}.
 
 \begin{corollary}\label{flexible Cor}The statements of \Cref{Theorem on Phi-variation} and of the Corollaries \ref{g limit cor} and \ref{rho corollary} remain fully valid for the functions of the form \eqref{flexible class}.
 \end{corollary}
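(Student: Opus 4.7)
The plan is to reduce the flexible-class setting \eqref{flexible class} to the already-established \Cref{Theorem on Phi-variation} by verifying that the essential distributional structure used in its proof is preserved under the insertion of the sign functions $\sigma_m$. The proof of \Cref{Theorem on Phi-variation} for the standard class \eqref{vdw} rests on expressing the increments $\Delta_{n,k}f:=f((k+1)2^{-n})-f(k2^{-n})$ as a weighted sum $2^{-n}\sum_{m=0}^{n-1}\alpha_m 2^m\, r_{m,k}+R_{n,k}$, where $r_{m,k}\in\{-1,+1\}$ are Rademacher-type signs recording the slope of $\varphi(2^m\cdot)$ on $[k2^{-n},(k+1)2^{-n}]$ and $R_{n,k}$ collects the contribution from $m\ge n$, and on the fact that $\{r_{m,k}\}_{m=0}^{n-1}$ is an i.i.d.\ symmetric Bernoulli system when $k$ is uniform on $\{0,\dots,2^n-1\}$. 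For the flexible class one simply replaces $r_{m,k}$ by $\tilde\epsilon_{m,k}:=\sigma_m\big|_{[k2^{-n},(k+1)2^{-n}]}\cdot r_{m,k}$, so the whole task reduces to checking the same independence and symmetry for $\{\tilde\epsilon_{m,k}\}_{m=0}^{n-1}$.

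For this, I would write $k=\sum_{i=0}^{n-1}b_i 2^i$ and track which bits each factor depends on. For $m<n$, the Rademacher sign $r_{m,k}$ depends only on the single bit $b_{n-m-1}$, whereas $\sigma_m$ is constant on each dyadic block of length $2^{-m}$, so $\sigma_m\big|_{[k2^{-n},(k+1)2^{-n}]}$ is a function of the top $m$ bits $b_{n-m},\dots,b_{n-1}$ only. Therefore $\tilde\epsilon_{m,k}$ depends exactly on bits $b_{n-m-1},\dots,b_{n-1}$, with the fresh bit $b_{n-m-1}$ entering only through $r_{m,k}$ and flipping its sign. Conditioning on $\tilde\epsilon_{0,k},\dots,\tilde\epsilon_{m-1,k}$ recursively fixes $b_{n-1},\dots,b_{n-m}$ and hence fixes $\sigma_m\big|_{[k2^{-n},(k+1)2^{-n}]}$, so $\tilde\epsilon_{m,k}$ reduces to a deterministic sign times $r_{m,k}$, which is uniform on $\{-1,+1\}$ as $b_{n-m-1}$ is still fair. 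By induction, $\{\tilde\epsilon_{m,k}\}_{m=0}^{n-1}$ is i.i.d.\ symmetric Bernoulli under uniform $k$.

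The tail term obeys the same uniform bound $|R_{n,k}|\le\sum_{m\ge n}|\alpha_m|\to 0$ as in the standard case, since $\|\sigma_m\varphi(2^m\cdot)\|_\infty\le 1/2$ for every choice of $\sigma_m$. With the i.i.d.\ Bernoulli structure intact, the sum $\sum_{m=0}^{n-1}\alpha_m 2^m\tilde\epsilon_{m,k}$ under uniform $k$ has exactly the law arising in the proof of \Cref{Theorem on Phi-variation}, so the applications of \Cref{conv thm} and \Cref{CLT prop} carry over verbatim and yield \eqref{Phi0 variation eq} and \eqref{Bernoulli thm c eqn} for $f$ of the form \eqref{flexible class}. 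The Corollaries \ref{g limit cor} and \ref{rho corollary}, being formal consequences of \Cref{Theorem on Phi-variation}, extend automatically.

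The main obstacle I anticipate is the bit-separation bookkeeping in the second step: one must be careful that the bit $b_{n-m-1}$ driving $r_{m,k}$ is strictly \emph{lower} than every bit used by $\sigma_{m'}\big|_{[k2^{-n},(k+1)2^{-n}]}$ for $m'\le m$, which is precisely what guarantees the conditional symmetry. Off-by-one indexing is the only genuine pitfall; beyond this, the reduction is routine and requires no new probabilistic or analytic ingredients beyond those already developed in \Cref{limit section} and \Cref{Phi variation section}.
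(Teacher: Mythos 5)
Your reduction is correct and is essentially the paper's own route: the paper treats \eqref{flexible class} by observing (citing the proof of Theorem 2.1 in \cite{MishuraSchied2} alongside Proposition 3.2 in \cite{SchiedZZhang}) that the signed slopes at a uniformly random dyadic point remain an i.i.d.\ symmetric $\{-1,+1\}$-valued Bernoulli sequence --- exactly the bit-conditioning fact you verify by hand --- after which every step of the proof of \Cref{Theorem on Phi-variation} carries over verbatim, and Corollaries \ref{g limit cor} and \ref{rho corollary} follow formally. One small sharpening of your third paragraph: there is no tail term at all, since for $m\ge n$ one has $\varphi(2^m k2^{-n})=\varphi(k2^{m-n})=0$ at every dyadic point of level $n$, so $R_{n,k}\equiv 0$; this matters because a mere bound $|R_{n,k}|\le\sum_{m\ge n}|\alpha_m|\to0$ is of the same order as the increments themselves (e.g.\ $\approx 2^{-n(1-q)}\sqrt{g(n)}$) and would not by itself be negligible.
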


When the goal is to utilize \Cref{Theorem on Phi-variation}
 for the construction of functions with prescribed $\Phi_q$-variation for a given function $g$, it can be inconvenient that the condition \eqref{Thm_eq_1} is formulated in terms of the sums $s_n^2=\sum_{m=0}^{n-1}\alpha_m^24^m$ rather than in terms of the coefficients $\alpha_m$ themselves. To deal with this issue, we are going to formulate \Cref{CLT cond prop}, which will also be used in the proofs of our main results.

\begin{remark}\label{ell g remark} Let us recall the following facts on slowly and regularly varying functions so as to put the statement of  the following \Cref{CLT cond prop}  into the context of \Cref{Theorem on Phi-variation}. \begin{enumerate}
\item If $g$ is a regularly varying function and $b>1$, then $\ell(x):=g(\log_bx)$ is slowly varying according to Proposition 1.5.7 (ii)  in \cite{BinghamGoldieTeugels}. Moreover, our condition \eqref{Thm_eq_1} implies that 
$$\lim_{n\to\infty}\frac{s_n^2}{2^{2qn}\ell(b^n)}=1.
$$
\item Recalling that we assume all regularly (and hence slowly) varying functions to be locally bounded, and thus locally integrable, we may define
\begin{equation}\label{ell def eq}
\ell(x):= \frac1{\log b}\int_1^x\frac1tL(t)\,dt=\int_0^{\log_bx}L(b^s)\,ds
\end{equation}
for a given slowly varying function $L$.
Then, according to Proposition 1.5.9 (a) in \cite{BinghamGoldieTeugels}, the function $\ell$ is slowly varying  and satisfies $\ell(x)/L(x)\to+\infty$ as $x\ua\infty$.
\end{enumerate}
\end{remark}

\begin{proposition}\label{CLT cond prop} 
Let  $\{\beta_m\}_{m \in \bN_0}$ be a sequence of real numbers, $s_n^2=\sum_{m=0}^{n-1}\beta_m^2$, and $b>1$. 
\begin{enumerate}[{\rm (a)}]
\item\label{CLT cond prop (a)} Suppose that $s_n\to\infty$ as $n\ua\infty$, let $L$ be a slowly varying function, define $\ell$ as in \eqref{ell def eq}, and consider the following two conditions:
\begin{enumerate}[{\rm (i)}]
\item\label{CLT cond prop (a) (i)} $\beta_n^2/L(b^n)\to1$ as $n\ua\infty$.
\item\label{CLT cond prop (a) (ii)} 
 $s_n^2/\ell(b^n)\to1$ as $n\ua\infty$.
\end{enumerate}
Then {\rm(i)}$\Rightarrow${\rm(ii)}.

\item If $\ell$ is a slowly varying function and $q>0$, the following are equivalent:
\begin{enumerate}[{\rm (i)}]\addtocounter{enumii}{2}
\item $\beta_n^2/2^{2qn}\ell(b^n) \to1$ as $n\ua\infty$.
\item $s_n^2/2^{2qn}\ell(b^n) \to(2^{2q}-1)^{-1}$ as $n\ua\infty$.
\end{enumerate}
\item\label{CLT cond prop e} Any of the conditions  {\rm(i)}--{\rm(iv)} implies $s_{n-1}^2/s_n^2\to 2^{2q}$ as $n\ua\infty$, where we take $q=0$ in the context of~{\rm({a})}.
\end{enumerate}
\end{proposition}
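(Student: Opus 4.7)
My plan is to prove (a), (b), and (c) in turn, each by reducing the partial sum $s_n^2=\sum_{m=0}^{n-1}\beta_m^2$ to the asymptotic equivalent of its summands and then comparing the resulting deterministic sum with either an integral or a closed-form geometric-type expression. A key reusable observation is the following replacement principle: if $\beta_m^2=a_m(1+\varepsilon_m)$ with $a_m\ge 0$, $\varepsilon_m\to 0$, and $A_n:=\sum_{m=0}^{n-1}a_m\to\infty$, then $s_n^2\sim A_n$; this follows by splitting the sum at a large index $M$ and bounding the tail contribution by $\sup_{m\ge M}|\varepsilon_m|\cdot A_n$.

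For part (a), I would first apply this principle with $a_m=L(b^m)$: since $\beta_m^2/L(b^m)\to 1$ and $s_n\to\infty$, it gives $s_n^2\sim\sum_{m=0}^{n-1}L(b^m)$ (the divergence of the latter sum is forced by the divergence of $s_n^2$ together with $\beta_m^2\sim L(b^m)$). The second step is a Stolz--Ces\`aro comparison of $\sum_{m=0}^{n-1}L(b^m)$ with $\ell(b^n)=\int_0^n L(b^s)\,ds$: since $L$ is slowly varying at infinity, $L(b^s)/L(b^m)\to 1$ uniformly for $s\in[m,m+1]$, which gives $\ell(b^n)-\ell(b^{n-1})=\int_{n-1}^n L(b^s)\,ds\sim L(b^{n-1})$, matching the per-step increment of $\sum L(b^m)$. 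Using also that $\ell(b^n)\to\infty$, Stolz--Ces\`aro then yields $\sum_{m=0}^{n-1}L(b^m)\sim\ell(b^n)$ and hence (ii).

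For part (b), the factor $2^{2qm}$ with $q>0$ makes the partial sum dominated by its last block of terms. For (iii)$\Rightarrow$(iv), I would apply the replacement principle with $a_m=2^{2qm}\ell(b^m)$ to reduce $s_n^2$ to $T_n:=\sum_{m=0}^{n-1}2^{2qm}\ell(b^m)$, and then use Stolz--Ces\`aro against $U_n:=(2^{2q}-1)^{-1}2^{2qn}\ell(b^n)$ by checking
\begin{equation*}
U_n-U_{n-1}=(2^{2q}-1)^{-1}\bigl(2^{2qn}\ell(b^n)-2^{2q(n-1)}\ell(b^{n-1})\bigr)\sim 2^{2q(n-1)}\ell(b^{n-1})=T_n-T_{n-1},
\end{equation*}
where slow variation of $\ell$ gives $\ell(b^n)/\ell(b^{n-1})\to 1$. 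For (iv)$\Rightarrow$(iii), I would write $\beta_{n-1}^2=s_n^2-s_{n-1}^2$ and observe that $s_n^2-s_{n-1}^2\sim(1-2^{-2q})s_n^2$, so the $o(s_n^2)$ errors in the individual approximations of $s_n^2$ and $s_{n-1}^2$ are absorbed into $o(s_n^2-s_{n-1}^2)$, yielding $\beta_{n-1}^2\sim U_n-U_{n-1}\sim 2^{2q(n-1)}\ell(b^{n-1})$.

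Part (c) is then almost immediate. Under the hypotheses of (a) (with the convention $q=0$), $s_{n-1}^2/s_n^2=1-\beta_{n-1}^2/s_n^2\sim 1-L(b^{n-1})/\ell(b^n)\to 1$, since $\ell(x)/L(x)\to\infty$ as recalled in Remark~\ref{ell g remark}(b). Under (iii)--(iv), direct computation gives $s_{n-1}^2/s_n^2\sim 2^{-2q}\,\ell(b^{n-1})/\ell(b^n)\to 2^{-2q}$ by slow variation of $\ell$. I expect the main technical obstacle to be the sum-to-integral comparison in part (a), which requires careful use of the slow variation of $L$ to ensure that the unit-step increments of $\ell(b^n)$ match those of $\sum L(b^m)$ asymptotically and not merely up to constants; once that step is secured, the rest is bookkeeping.
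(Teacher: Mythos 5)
Your proposal is correct in substance and follows the same overall architecture as the paper --- a sum-versus-integral comparison for (a), a Stolz-type argument for (b), and a direct computation for (c) --- but it differs in two worthwhile details. In (a), where the paper brackets $\beta_m^2$ between $(1\pm\eps)L(b^m)$ and controls $\int_m^{m+1}L(b^t)\,dt$ via the Potter bounds, you invoke the uniform convergence theorem for slowly varying functions and then apply Stolz--Ces\`aro; both work, and your route avoids the auxiliary exponent $\delta$. One step to make explicit: $\ell(b^n)\to\infty$ is not automatic for an arbitrary slowly varying $L$ (the integral in \eqref{ell def eq} may converge), but it does follow from the divergence of $\sum_m L(b^m)$ (forced by $s_n\to\infty$ and (i)) together with your increment bound $\int_m^{m+1}L(b^s)\,ds\ge(1-\eps)L(b^m)$; the paper makes the analogous observation before dividing by $\ell(b^n)$. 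In (b), the paper obtains the equivalence in one stroke by citing a Stolz theorem together with its converse (Lemma 3.1 of \cite{MishuraSchied}), whereas you prove (iii)$\,\Rightarrow\,$(iv) by your replacement principle plus Stolz--Ces\`aro and (iv)$\,\Rightarrow\,$(iii) by hand, writing $\beta_{n-1}^2=s_n^2-s_{n-1}^2$ and absorbing the $o(s_n^2)$ errors into $o(s_n^2-s_{n-1}^2)$ because $1-2^{-2q}>0$. That is a legitimate, self-contained substitute for the converse-Stolz citation and correctly isolates where $q>0$ is used.

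Two points on (c). First, your limit $2^{-2q}$ is the correct one: since $s_{n-1}^2\le s_n^2$, the ratio cannot tend to $2^{2q}>1$ when $q>0$, and under (iv) one indeed gets $s_{n-1}^2/s_n^2\sim 2^{-2q}\,\ell(b^{n-1})/\ell(b^n)\to 2^{-2q}$. The value $2^{2q}$ in the statement, and the matching prefactor in the paper's one-line proof of (c), is a slip; it is harmless downstream, since the paper only uses (c) in the regime $q=0$, where the limit is $1$ as required in \Cref{CLT prop}. Second, (c) asserts the conclusion under any single one of (i)--(iv); your computation in the context of (a) uses (i) (through $\beta_{n-1}^2\sim L(b^{n-1})$), so add the one-line variant covering (ii) alone, namely $s_{n-1}^2/s_n^2\sim\ell(b^{n-1})/\ell(b^n)\to1$ by slow variation of $\ell$; in the context of (b) your direct computation from (iv) already covers (iii) as well, since you have proved (iii)$\,\Rightarrow\,$(iv).
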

 
 The proofs of \Cref{Prop Khine}, \Cref{Theorem on Phi-variation}, \Cref{flexible Cor}, \Cref{g limit cor}, and \Cref{CLT cond prop}  are given in \Cref{proofs section}.  \Cref{Theorem on Phi-variation} and \Cref{CLT cond prop} yield immediately the following result, which provides a simple construction for a function with prescribed $\Phi_q$-variation. When applying it to the stochastic process \eqref{stoch process}, it provides a straightforward way of constructing a stochastic process whose sample paths have deterministic and linear $\Phi_q$-variation; see \Cref{rho figure} for an illustration.
\goodbreak
 
 \begin{corollary} Let $g$ be a regularly varying function,  $q\in[0,1)$, and $\Phi_q$ as in \eqref{Phiq eq}. 
\begin{enumerate}
\item For $q\in(0,1)$,  the functions $f$ in \eqref{vdw}
and \eqref{flexible class} with coefficients  
$$\alpha_m:=2^{-m(1-q)}\sqrt{(2^{2 q} - 1)g(m)} $$
 have linear $\Phi_q$-variation \eqref{Bernoulli thm c eqn}.
\item For $q=0$, we assume in addition that $g(x)\to\infty$ as $x\ua\infty$ and that $g$ is absolutely continuous with regularly varying derivative $g'$. Then the functions $f$ in \eqref{vdw}
and \eqref{flexible class} with coefficients 
$$\alpha_m:=2^{-m}\sqrt{g'(m)}$$
 have linear $\Phi_q$-variation \eqref{Phi0 variation eq}.
\end{enumerate} 
   \end{corollary}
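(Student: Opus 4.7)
The plan is to reduce both parts to a verification of hypothesis \eqref{Thm_eq_1} of \Cref{Theorem on Phi-variation}; once that is established, \Cref{Theorem on Phi-variation} combined with \Cref{flexible Cor} immediately delivers the linear $\Phi_q$-variation for $f$ as in \eqref{vdw} and also for $f$ in \eqref{flexible class}. The bridge from the coefficients $\alpha_m$ to the partial sums $s_n^2=\sum_{m=0}^{n-1}\alpha_m^2\cdot 4^m=\sum_{m=0}^{n-1}\beta_m^2$ will be \Cref{CLT cond prop}, with $\beta_m:=\alpha_m\cdot 2^m$.

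For part~(a), a direct computation gives $\beta_m^2=(2^{2q}-1)\cdot 2^{2mq}g(m)$. Setting $b:=2$ and $\ell(x):=(2^{2q}-1)\,g(\log_2 x)$ --- which is slowly varying by \Cref{ell g remark}(a) --- yields $\beta_n^2/(2^{2qn}\ell(2^n))=1$, so condition~(iii) of \Cref{CLT cond prop}(b) holds trivially. The equivalent condition~(iv) then gives $s_n^2/(2^{2qn}\ell(2^n))\to(2^{2q}-1)^{-1}$, i.e., $s_n^2/(2^{2qn}g(n))\to 1$, which is precisely \eqref{Thm_eq_1} for the given $q\in(0,1)$.

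For part~(b), we have $\beta_m^2=g'(m)$. The plan is to invoke \Cref{CLT cond prop}(a) with $b:=2$ and $L(x):=g'(\log_2 x)$, so that $\beta_n^2/L(2^n)=1$ by construction. Two ingredients must be checked. First, $L$ is slowly varying: since $g'$ is regularly varying with some index $\rho$, one has $L(\lambda x)/L(x)=g'(\log_2 x+\log_2\lambda)/g'(\log_2 x)\to 1$ by the translation property of regularly varying functions, which in turn follows from the uniform convergence theorem (Theorem~1.5.2 in \cite{BinghamGoldieTeugels}). Second, $s_n\to\infty$: the positivity $g'>0$ together with $g(\infty)=\infty$ rule out $\int_0^\infty g'(t)\,dt<\infty$, and a standard Karamata-type sum-integral comparison for regularly varying functions then gives $s_n^2=\sum_{m=0}^{n-1}g'(m)\sim\int_0^n g'(t)\,dt=g(n)-g(0)\to\infty$. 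The substitution $s=\log_2 t$ in \eqref{ell def eq} identifies $\ell(2^n)=g(n)-g(0)\sim g(n)$, and \Cref{CLT cond prop}(a) then yields $s_n^2/\ell(2^n)\to 1$, hence $s_n^2/g(n)\to 1$. This is \eqref{Thm_eq_1} with $q=0$, and \Cref{Theorem on Phi-variation}(b) finishes the argument.

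The step I expect to be the most delicate is the verification $s_n\to\infty$ in part~(b): it requires both excluding $\int_0^\infty g'<\infty$ (which would contradict $g(\infty)=\infty$, forcing the regularly varying index of $g'$ to be at least $-1$) and the Karamata-type comparison between $\sum_m g'(m)$ and $\int g'(t)\,dt$. Once this is in place, the remainder is a routine chain of substitutions feeding into the already-established \Cref{CLT cond prop} and \Cref{Theorem on Phi-variation}.
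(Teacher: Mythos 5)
Your proposal is correct and follows essentially the same route as the paper: both parts reduce to checking \eqref{Thm_eq_1} via \Cref{CLT cond prop} (part (b) with $L(x)=g'(\log_2 x)$ and $\ell(2^n)=g(n)-g(0)$, part (a) trivially from condition (iii) with $\ell(x)=(2^{2q}-1)g(\log_2 x)$), and then invoking \Cref{Theorem on Phi-variation} and \Cref{flexible Cor}. Your explicit verification that $s_n\to\infty$ in part (b) — via the Karamata-type sum--integral comparison, which in effect re-runs the argument inside \Cref{CLT cond prop}(a) — is a hypothesis the paper's proof leaves implicit, so this is a welcome, if slightly redundant, addition rather than a deviation.
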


\begin{proof} (b) Since the function $L(x):=g'(\log_2x)$ is slowly varying by \Cref{ell g remark}, \Cref{CLT cond prop} (a) yields that $s_n^2/\ell(2^n)\to1$, where $\ell(x)=\int_0^{\log_2x}L(2^t)\,dt=g(\log_2x)-g(0)$. Hence, the conditions of  \Cref{Theorem on Phi-variation} (b) are satisfied. 
Part (a) is now obvious. \end{proof}

 \begin{figure}[h]
 \centering
\includegraphics[width=5.75cm]{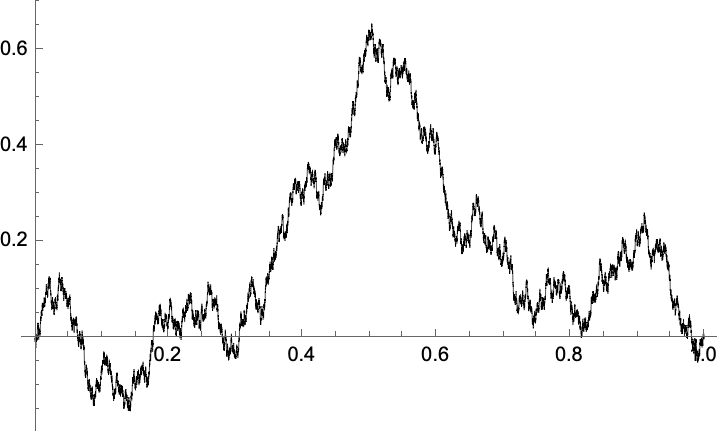}
\quad \includegraphics[width=5.75cm]{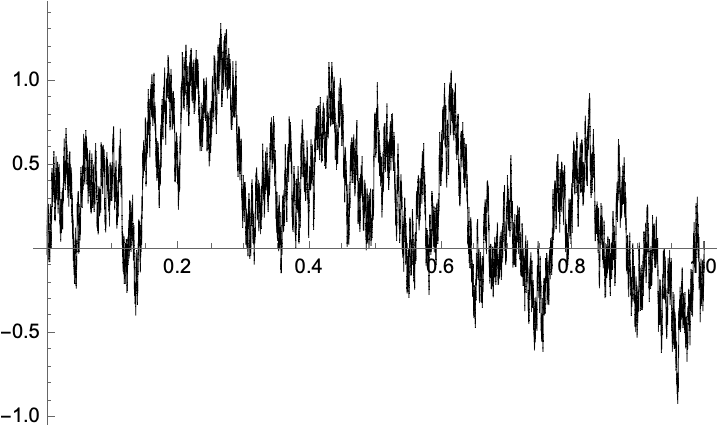}
\quad 
\includegraphics[width=5.75cm]{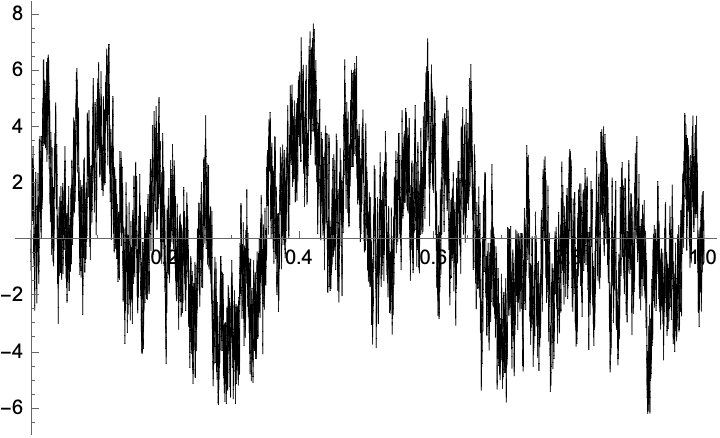}
\\
 \medskip
\includegraphics[width=5.75cm]{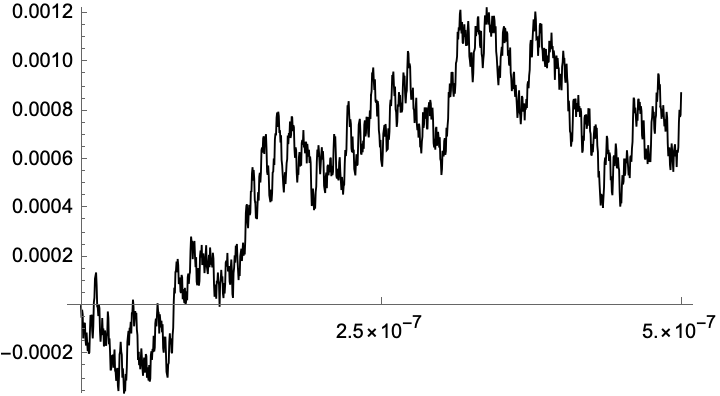}
\quad \includegraphics[width=5.75cm]{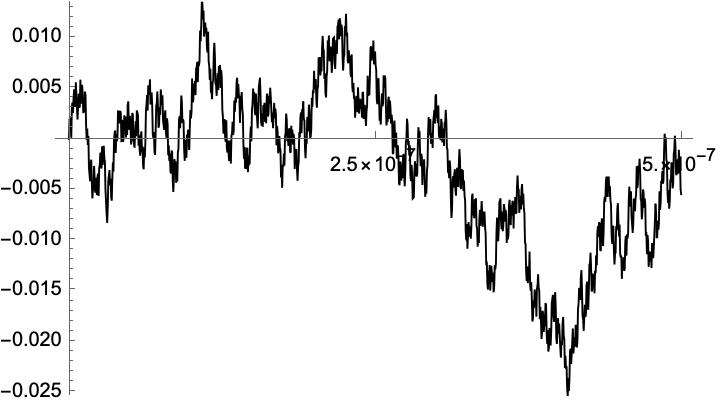}
\quad 
\includegraphics[width=5.75cm]{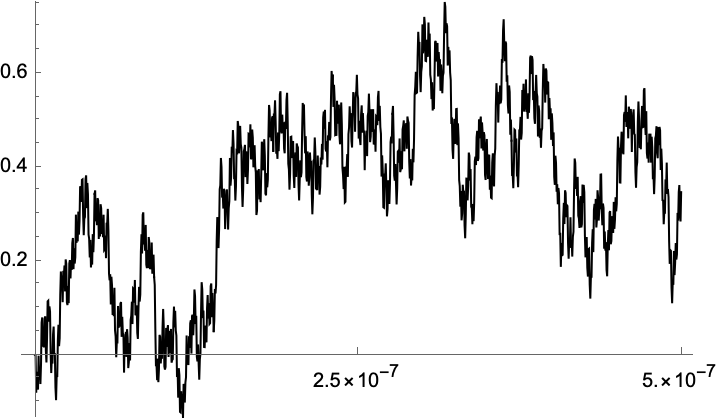}
\caption{Sample paths  of the stochastic process $\{X_t\}_{t\in[0,1]}$ defined in \eqref{stoch process} with 
$\alpha_m=2^{-m(1-q)}\sqrt{(2^{2 q} - 1)g(m)} $  for $q=0.7$, $g(x)=(1+x)^\varrho$
with $\rho=-2$ (left), $\rho=0$ (center), $\rho=2$ (right), and signs  $\{\xi_{m,k}\}_{m \in \bN_0, k = 1,\cdots,2^m}$ that are  i.i.d.~$\{-1,+1\}$-valued Bernoulli random variables. For each $\varrho$, a new realization of  $\{\xi_{m,k}\}$ was chosen. One can see from the three top panels that the parameter $\varrho$ has a strong influence on the macroscopic fluctuations of the trajectories. 
The fact that all three   functions share the same critical exponent $p=1/(1-q)=10/3$ as defined in \Cref{Prop Khine}  suggest a common Hurst exponent of $H=1/p=0.3$ (see the discussion on page~\pageref{Hurst}). One would thus expect the fluctuations of the three functions to be  similar on a microscopic scale. This effect can indeed be observed in the lower  panels below, which show the same three functions after zooming into the interval $[0,\eps]$ for $\eps=5\cdot10^{-7}$. 
 }
\label{rho figure}
 \end{figure}

\section{Proofs}\label{proofs section}

\begin{proof}[Proof of \Cref{CLT cond prop}] (i)$\Rightarrow$(ii): For any given $\eps > 0$, there exists $n_0$ such that for all $m > n_0$, we have 
$
     L(b^m)(1 - \eps) <\beta_m^2 <  L(b^m)(1 + \eps).
$
Hence, for all $\nu \ge n_0$ and $n>\nu $,
\begin{equation}\label{thm_normal_eq_2}
    \sum_{m = 0}^{\nu }\beta_m^2 + (1-\eps)\sum_{m = \nu +1}^{n-1} L(b^m) < \sum_{m = 0}^{n-1}\beta_m^2 <  \sum_{m = 0}^{\nu }\beta_m^2 +(1+\eps)  \sum_{m = \nu +1}^{n-1} L(b^m).
\end{equation}
The Potter bounds  (e.g., Theorem 1.5.6 in \cite{BinghamGoldieTeugels}) yield for   every $\delta>0$ some $n_1\in\bN$ such that 
\begin{equation*}\label{Potter bounds eq}
(1-\eps)b^{\delta(m-t)}\le \frac{L(b^t)}{L(b^m)}\le (1+\eps)b^{\delta(t-m)}\qquad\text{for all $t,m\ge n_1$.}
\end{equation*}
We hence get that for $m\ge n_1$,
\begin{align*}
L(b^m) (1-\eps)\frac{1-b^{-\delta}}{ \delta\log b}
\le \int_m^{m+1} L(b^t)\,dt\le L(b^m) (1+\eps)\frac{b^{\delta}-1}{ \delta\log b}.
\end{align*}
Since $\pm(1-b^{\mp\delta})/(\delta\log b)\to 1$ as $\delta\da0$, we can choose $\delta$ small enough  such that for any $m\geq n_1$,
\begin{equation*}
(1-\eps)^2 L(b^m) 
\le \int_m^{m+1} L(b^t)\,dt\le (1+\eps)^2 L(b^m).
\end{equation*}
Using this estimate in \eqref{thm_normal_eq_2}  and dividing both sides of the result by $\int_{0}^{n}  L(b^t)\,dt=\ell(b^n)$  gives
\begin{equation}\label{series int comparison eq}
     \frac{\sum_{m = 0}^{n_1-1}\beta_m^2}{\int_{0}^{n} L(b^t)\,dt} + \frac{(1-\eps)^3 \int_{n_1}^{n}  L(b^t)\,dt}{\int_{0}^{n} L(b^t)\,dt}< \frac{s_n^2}{\ell(b^n)}<  \frac{\sum_{m = 0}^{n_1-1}\beta_m^2}{\int_{0}^{n} L(b^t)\,dt} + \frac{(1+\eps)^3\int_{n_1}^{n}  L(b^t)\,dt}{\int_{0}^{n} L(b^t)\,dt}.
\end{equation}
Since $s_n\to\infty$, the inequalities in \eqref{series int comparison eq} imply that we must also have $\ell(b^n)=\int_{0}^{n}  L(b^t)\,dt\to\infty$ as $n\ua\infty$. It follows that the left- and right-hand sides of \eqref{series int comparison eq} tend to $(1-\eps)^3$ and $(1+\eps)^3$ as $n\ua\infty$, respectively. Sending $\eps\da0$ now yields (ii).

(iv)$\Leftrightarrow$(iii): It follows from the Stolz theorem and its converse given in Lemma 3.1 of \cite{MishuraSchied}  that the existence of one of the following limits entails the existence of the other one and that in this case both must be equal: 
\begin{align*}
\lim_{n^\ua\infty}\frac{s_n^2}{2^{2qn}\ell(b^n)}\qquad\text{and}\qquad \lim_{n^\ua\infty}\frac{s_{n+1}^2-s^2_{n}}{2^{2q(n+1)}\ell(b^{n+1})-2^{2qn}\ell(b^{n})}.
\end{align*}
Moreover, if it exists, the limit on the right-hand side is equal to
$$\lim_{n^\ua\infty}\frac{\beta_n^2}{2^{2qn}\ell(b^n)}\cdot\frac1{ (2^{2q}\frac{\ell(b\cdot b^{n})}{\ell(b^n)}-1)},
$$
and here   the second factor  converges to $(2^{2q}-1)^{-1}$.

(c): We have
 $$\lim_{n\ua\infty}\frac{s_{n-1}^2}{s_n^2}=2^{2q}\lim_{n\ua\infty}\frac{\ell(b^{n-1})}{\ell(b\cdot b^{n-1})}=2^{2q}.
 $$
 This concludes the proof.
 \end{proof}

Now we turn toward proving \Cref{Theorem on Phi-variation} (and also \Cref{flexible Cor} along the way).
Following \cite{HanSchiedZhang1,SchiedZZhang}, we let $(\Omega,\cF,\bP)$ be a probability space supporting an independent sequence $U_1,U_2, \cdots$ of symmetric $\{0,1\}$-valued Bernoulli random variables. Then we define the stochastic process $R_m := \sum_{k = 1}^{m}U_k2^{k-1}$ and set
\begin{equation}\label{YmZm eq}
Y_m:=\frac{\varphi((R_m+1) 2^{-m})-\varphi(R_m 2^{-m})}{ 2^{-m}},\qquad \beta_m:= 2^m\alpha_m,\qquad\text{and}\qquad s_n^2:=\sum_{m=0}^{n-1}\beta_m^2.
\end{equation}
Proposition 3.2 (a) in \cite{SchiedZZhang} states that $Y_1,Y_2,\dots$ is an i.i.d.~sequence of symmetric $\{-1,+1\}$-valued Bernoulli random variables. Moreover, the proof of Theorem 2.1 in \cite{MishuraSchied2} gives the same result in the context of \Cref{flexible Cor}, so that all subsequent observations are also valid for the  functions of the form \eqref{flexible class}.

 Note that $R_m$ has a uniform distribution on $\{0,\dots, 2^m-1\}$. 
Therefore, for $n\in\bN$ such that all increments $\big|f((k+1)2^{-n})-f(k2^{-n})\big|$ are less than 1,
\begin{equation*}\label{Vn def eq}
V_n:=\sum_{k=0}^{2^n-1}\Phi\big(\big|f((k+1)2^{-n})-f(k2^{-n})\big|\big)=2^n\mathbb E\Big[\Phi\big(\big|f((R_n+1)2^{-n})-f(R_n2^{-n})\big|\big)\Big].
\end{equation*}
To analyze the expectation on the right, let the $n^{\text{th}}$ truncation of $f$ be given by
$
f_n(t)=\sum_{m=0}^{n-1} \alpha_m\varphi(2^mt).
$
With the notation introduced in \eqref{YmZm eq}, we get
\begin{align*}
f((R_n+1)2^{-n})-f(R_n2^{-n})&=f_n((R_n+1)2^{-n})-f_n(R_n2^{-n})\\
&=2^{-n} \sum_{m=0}^{n-1}\beta_m\frac{\varphi((R_n+1)2^{m-n})-\varphi(R_n2^{m-n})}{2^{m-n}}\\
&=2^{-n} \sum_{m=0}^{n-1}\beta_mY_{n-m},
\end{align*}
where in the last step we have used that $\varphi(x+R_n2^{-m})=\varphi(x+R_m2^{-m})$ due to the periodicity of $\varphi$.
Hence, if we define 
$$Z_n:=\sum_{m=0}^{n-1}\beta_mY_{n-m}=\sum_{m=1}^{n}\beta_{n-m}Y_{m},
$$
then
\begin{equation}\label{eqV}
    V_n=2^n\bE\big[\Phi(| 2^{-n}Z_n|)\big].\end{equation}
Thus, the $\Phi$-variation of $f$ is determined by  the limit of the right-hand expectations as $n\ua\infty$.

\begin{proof}[Proof of \Cref{Prop Khine}] Taking $\Phi(x)=x^p$, we get from \eqref{eqV} that $V_n^{(p)}=2^{n(1-p)}\bE[|Z_n|^p]$. By Khintchine's inequality, there exist constants $A_p$ and $B_p$ only depending on $p$ such that $0 < A_p \le B_p < \infty$, and 
\begin{equation*}
   A_p2^{n(1-p+pq_n)} =  A_p2^{n(1-p)}(s_n^2)^{p/2} \le V^{(p)}_n \le B_p 2^{n(1-p)}(s_n^2)^{p/2} = B_p2^{n(1-p+pq_n)},
\end{equation*}
where  $q_n:=\frac1{n}\log_2 s_n$.  From here, the assertion is straightforward.\end{proof}

Now we prove \Cref{Theorem on Phi-variation}.
For better accessibility, we have divided the proof  into several parts.

\begin{proof}[Proof of \Cref{Theorem on Phi-variation} {\rm(a)}]
 Assuming that $\lim_ns_n^2<\infty$, we define $\beta_m=\alpha_m2^m$ and
 \begin{equation*}\label{tilde Z eq}
 \wt Z_n:=\sum_{m=0}^{n-1}\beta_mY_{m}.
 \end{equation*}
 Then the exchangeability of the sequence $\{Y_m\}_{m\in\bN_0}$ implies that $Z_n$ and $\wt Z_n$ have the same law. Clearly, $\wt Z_n\to \wt Z:=\sum_{m=0}^\infty\beta_mY_{m}$ in $L^1$. Therefore, \eqref{eqV} yields for the choice $\Phi(x):=x$,
$$\sum_{k=0}^{2^n-1}|f((k+1)2^{-n})-f(k2^{-n})|=2^n\bE[|2^{-n}Z_n|]\lra\bE[|\wt Z|].
$$
Since $f$ is continuous, this limit must coincide with the total variation of $f$ (see, e.g., Theorem 2 in \S5 of Chapter VIII in~\cite{Natanson}). Conversely, parts (b) and (c) of \Cref{Theorem on Phi-variation} will imply that $f$ can only be of finite total variation if $\lim_ns_n^2<\infty$.
\end{proof} 

\begin{proof}[Proof of \Cref{Theorem on Phi-variation} {\rm(b)} for $t=1$] Here, we identify the $\Phi_0$-variation $\<f\>_1^{\Phi_0}$ at time $1$. The linearity of the ${\Phi_0}$-variation $t\mapsto \<f\>_t^{\Phi_0}$ will be proved subsequently.  Condition \eqref{Thm_eq_1} for $q=0$ together with our assumption $s_n^2\to\infty$ implies via Proposition 1.5.1  in \cite{BinghamGoldieTeugels} that $g$ is regularly varying with index $\varrho\ge0$. Moreover, 
\Cref{ell g remark}, \Cref{CLT prop}, and \Cref{CLT cond prop} yield that the law and the first moment of $Z_n/s_n$ converge, respectively, to $N(0,1)$ and to its first moment.

Let us write $\psi(x):=\sqrt{g(x)}$ so  that $2^n{\Phi_0}(2^{-n}|Z_n|)=|Z_n|/\psi(n-\log_2|Z_n|)$. 
 It follows that
   \begin{align}\label{Z psi eq}
2^n{\Phi_0}(2^{-n}|Z_n|)\Ind{\{|Z_n|/\psi(n)\ge\eps\}}=  \frac{\psi(n)}{\psi(n-\log_2|Z_n|)}\cdot \frac{ |Z_n|}{\psi(n)}\Ind{\{|Z_n|/\psi(n)\ge\eps\}}.\end{align}
 To deal with the first factor on the right, we  estimate $n-\log_2|Z_n|$. First, on $\{|Z_n|/\psi(n)\ge\eps\}$, we have $\log_2|Z_n|\ge \log_2\eps+\ell(n)$, where $\ell(x):=\log_2\psi(x)$ is slowly varying by  Proposition 1.5.7  in \cite{BinghamGoldieTeugels} and hence satisfies $\ell(n)/n\to0$ according to Proposition 1.3.6 (v) in \cite{BinghamGoldieTeugels}.   Therefore, there exists $n_3\in\bN$ such that
 \begin{equation}\label{q=0 n- log2|Zn| upper bound}
 n-\log_2|Z_n|\le n-\log_2\eps-\ell(n)\le(1+\eps) n\qquad\text{on $\{|Z_n|/\psi(n)\ge\eps\}$ for all $n\ge n_3$.}
 \end{equation}
 Second, to get a lower bound, we use that $|Z_n|\le\sum_{m=0}^{n-1}|\beta_m|$. Jensen's inequality gives furthermore that 
 \begin{equation}\label{|Zn| eq}
 |Z_n|\le\sum_{m=0}^{n-1}|\beta_m| \le n\cdot \sqrt{\frac1n\sum_{m=0}^{n-1}\beta_m^2} =\sqrt ns_n.
 \end{equation}
 By \eqref{Thm_eq_1}, there is $n_4\in\bN$ such that $s_n\le(1+\eps)\psi(n)$ for all $n\ge n_4$. Thus, there is $n_5\ge n_4$ such that 
\begin{equation}\label{n-log2|Zn| lower bound}
n-\log_2|Z_n|\ge n-\log_2(1+\eps)-\frac12\log_2n-\ell(n)\ge (1-\eps)n\qquad\text{for all $n\ge n_5$.}
\end{equation}
Combining \eqref{q=0 n- log2|Zn| upper bound}
 and \eqref{n-log2|Zn| lower bound} now yields
\begin{equation}\label{psi/psi(log..) lower bound}
\frac{\psi(n)}{\psi(n-\log_2|Z_n|)}\ge \inf_{1-\eps\le\lambda\le 1+\eps}\frac{\psi(n)}{\psi(\lambda n)}
\qquad\text{for $n\ge n_5$ on $\{|Z_n|/\psi(n)\ge\eps\}$.}
\end{equation}
According to the uniform convergence theorem for regularly varying functions (Theorem 1.5.2 in \cite{BinghamGoldieTeugels}) and  the fact that  $\psi$  is regularly varying with index $\varrho/2\ge0$ by  Proposition 1.5.7 (i) in \cite{BinghamGoldieTeugels}, we have
\begin{equation}\label{psi/psi(log..) lower bound2}
\inf_{1-\eps\le\lambda\le 1+\eps}\frac{\psi(n)}{\psi(\lambda n)}
\lra (1-\eps)^{\varrho/2}.
\end{equation}
 To deal with the second factor on the right-hand side of \eqref{Z psi eq}, we choose  $n_6\ge n_5$ such that $\psi(n)\le (1+\eps)
  s_n$ for all $n\ge n_6$. Then, for $n\ge n_6$,
  \begin{equation}\label{|Zn|/psi(n) lower bound}
  \frac{ |Z_n|}{\psi(n)}\Ind{\{|Z_n|/\psi(n)\ge\eps\}}\ge \frac{ |Z_n|}{(1+\eps)s_n}\Ind{\{|Z_n|/s_n\ge\eps(1+\eps)\}}.
  \end{equation}
  Altogether, we get 
 \begin{align*}
    \liminf _{n \uparrow \infty} 2^{n} \mathbb{E}\left[{\Phi_0}\left(2^{-n}\left|Z_{n}\right|\right)\right] \geq \frac{ (1-\eps)^{\varrho/2}}{(1+\eps)\sqrt{2 \pi }} \int_{\{|z| \geq \varepsilon(1+\eps)\}}|z| e^{-z^{2} /2} \,d z.
\end{align*}
With \eqref{eqV}, we thus get  $\liminf_nV_n\ge\sqrt{2/\pi}$ by sending $\eps\da0$.

To get an upper bound, we recall  that $x\mapsto \Phi_0(1/x)$ is regularly varying (at infinity) with index $-1<0$. Theorem 1.8.2 in \cite{BinghamGoldieTeugels} hence yields a function $h$ that is regularly varying with index $-1$, strictly decreasing, and satisfies $\Phi_0(1/x)\le h(x)$ as well as $h(x)/\Phi_0(1/x)\to1$ as $x\ua\infty$. Thus, the function $\Psi(x):=h(1/x)$ is strictly increasing, regularly varying at zero with index  $1$, and satisfies $\Phi_0\le \Psi$ as well as $\Psi(x)/\Phi_0(x)\to1$ as $x\da0$.  
Let $\eps>0$ be given and choose $x_0>0$ such that $\Psi(x)\le \Phi_0(x)/(1-\eps)$ for all $x\le x_0$. 
%As a matter of fact, by replacing $g$ with $x\mapsto g(x\vee \wt x_0)$ for $\wt x_0:=-(1-q)^{-1}\log_2x_0$ and performing a similar operation on $\Psi$, we may assume without loss of generality that $\Psi(x)\le \Phi_0(x)/(1-\eps)$ for all $x<1$.
By \eqref{|Zn| eq}, there is $n_7\in\bN$ such that $2^{-n}|Z_n|\le x_0$ for all $n\ge n_7$. For such $n$, we get
\begin{align*}
2^{n} \mathbb{E}\left[\Phi_0\left(2^{-n}\left|Z_{n}\right|\right)\right]&\le 2^{n} \mathbb{E}\left[\Psi\left(2^{-n}\left|Z_{n}\right|\right)\right]\le 2^{n} \mathbb{E}\left[\Psi\left(2^{-n}(\eps+\left|Z_{n}\right|)\right)\right]\le \frac{2^{n}}{1-\eps} \mathbb{E}\left[\Phi_0\left(2^{-n}(\eps+\left|Z_{n}\right|)\right)\right]\\
&=\frac{s_n}{(1-\eps)\psi(n)}\bE\bigg[\frac{\eps+|Z_n|}{s_n}\cdot\frac{\psi(n)}{\psi(n-\log_2(\eps+|Z_n|))}\bigg].
\end{align*}
Clearly, the factor $s_n/\psi(n)$ converges to 1. Moreover, by \eqref{n-log2|Zn| lower bound} there is $n_8\ge n_5\vee n_7$ such that 
$$(1-\eps)n\le n-\log_2(\eps+|Z_n|)\le n-\log_2\eps\le (1+\eps)n\qquad\text{for all $n\ge n_8$.}
$$
As in \eqref{psi/psi(log..) lower bound} and \eqref{psi/psi(log..) lower bound2}, we thus get 
$$\frac{\psi(n)}{\psi(n-\log_2(\eps+|Z_n|))}\le \sup_{1-\eps\le\lambda\le 1+\eps}\frac{\psi(n)}{\psi(\lambda n)}\lra (1+\eps)^{\varrho/2}.
$$
We hence obtain the upper bound
\begin{equation}\label{Vn q=0 upper bound}
\limsup_{n\ua\infty}V_n\le \frac{ (1+\eps)^{\varrho/2}}{(1-\eps)\sqrt{2 \pi }} \int|z|e^{-z^{2} /2} \,d z,
\end{equation}
where the right-hand side reduces to $\sqrt{2/\pi}$ as $\eps\da0$.\end{proof}

The proof of  \Cref{Theorem on Phi-variation} {\rm(b)} for $0<t<1$ requires that we truncate the summation for $V_n$ in \eqref{eqV}
 at those indices $k$ for which $k2^{-n}\le t$. This is equivalent to restricting the expectation in \eqref{eqV}  to the set $\{2^{-n}R_n \le t\}$. The following proof adapts the arguments of the proof of  \Cref{Theorem on Phi-variation} {\rm(b)} for $t=1$ to this restricted case.

\begin{proof}[Proof of \Cref{Theorem on Phi-variation} {\rm(b)} for $0<t<1$] Let $ t \in(0, 1)$ be given. The $\Phi_0$-variation of $f$ over the interval $[0,t]$ is equal to $\lim_nV_{n,t}$, where
\begin{equation*}
\begin{split}
    V_{n,t}:= \sum_{k = 0}^{2^n-1}\Phi_0\Big(|f((k+1)2^{-n})-f(k2^{-n})|\Big)\mathbbm{1}_{[0,t]}(k2^{-n})= 2^n\bE\Big[\Phi_0\Big(2^{-n}|Z_n|\Big)\mathbbm{1}_{\{2^{-n}R_n \le t\}}\Big].
\end{split}
\end{equation*}
This can be proved similarly as in the derivation of \eqref{eqV}, see e.g. \cite{HanSchiedZhang1}. We fix $\delta\in(0,t\wedge(1-t))$ and $m \in \bN$ that $2^{-m} \le \delta$. Recall that $R_m := \sum_{k = 1}^{m}U_k2^{k-1}$ and let us denote $R_{m,n}:= R_n - R_{n-m}$ for short. Then
\begin{equation}\label{Rn inclusions eq}
\{2^{-n}R_{m,n} \le t -\delta\} \subseteq \{2^{-n}R_n \le t\} \subseteq \{2^{-n}R_{m,n} \le t\}.
\end{equation}
 First, we derive a lower bound for $V_{n,t}$. 
 To this end, we define for $0\le m<n$,
 $$Z_{m,n}:=\sum_{k=1}^{n-m}\beta_{n-k}Y_k=Z_n-\sum_{k=n-m+1}^n\beta_{n-k}Y_k.
 $$
 Then, since $s_n\to\infty$,
 $$\frac{|Z_n-Z_{m,n}|}{s_n}\le\frac1{s_n}\sum_{k=0}^{m-1}|\beta_{k}|=:a_n\lra0,
 $$
 and so the laws of $Z_{m,n}/s_n$ converge to $N(0,1)$ in the $L^1$-Wasserstein distance by Slutsky's theorem.
 By combining \eqref{Z psi eq}, \eqref{psi/psi(log..) lower bound}, \eqref{psi/psi(log..) lower bound2}, and \eqref{|Zn|/psi(n) lower bound}, there is  $n_9\ge n_6\vee m$ such that for all $n \ge n_9 $, with $c_\eps:=(1-\eps)^{1+\varrho/2}/(1+\eps)$,
\begin{equation*}
\begin{split}
    2^{n}\Phi_0(2^{-n}|Z_n|)\mathbbm{1}_{\{|Z_n|/\psi(n) \ge \eps,\,2^{-n}R_n \le t\}}&\ge \frac{c_\eps|Z_n|}{s_n}\mathbbm{1}_{\{|Z_n|/\psi(n) \ge \eps,\,2^{-n}R_n \le t\}}\\
  %&\ge \frac{|Z_{m,n}|}{(1+\eps)
  %s_n}\Ind{\{|Z_n|/\psi(n) \ge \eps,\,2^{-n}R_n \le t\}}-\frac{|Z_n-Z_{m,n}|}{(1+\eps)
 % s_n}\Ind{\{|Z_n|/\psi(n) \ge \eps,\,2^{-n}R_n \le t\}}
%  \\
  &\ge \frac{c_\eps|Z_{m,n}|}{
  s_n}\Ind{\{|Z_n|/\psi(n) \ge \eps,\,2^{-n}R_n \le t\}}-\frac{c_\eps|Z_n-Z_{m,n}|}{  s_n}\\ &\ge  \frac{c_\eps|Z_{m,n}|}{
  s_n}\Ind{\{|Z_n|/\psi(n) \ge \eps,\,2^{-n}R_n \le t\}}-\frac{c_\eps\sum_{k = 0}^{m-1}|\beta_k|}{
  s_n},
 \end{split}
\end{equation*}
where the last two inequalities hold due to the triangle inequality. Again,  the right-most term decays to zero as $n\ua\infty$. Hence, there is $n_{10} \ge n_9$ such that  for $n \ge  n_{10}$  and $|Z_{m,n}|/s_n \ge 2\eps(1+\eps)$, 
\begin{equation*}
    \frac{|Z_n|}{s_n} \ge \frac{|Z_{m,n}|-|Z_n-Z_{m,n}|}{s_n} \ge \frac{|Z_{m,n}|-\sum_{k = 0}^{m-1}|\beta_k|}{s_n} \ge \eps(1+\eps).
\end{equation*}
Hence, for those $n$ and $ m$, we have $\{|Z_{m,n}|/s_n \ge 2\eps(1+\eps)\}\subseteq\{|Z_n|/s_n \ge \eps(1+\eps)\}$ and in turn
\begin{equation*}
\begin{split}
   V_{n,t} +c_\eps a_n&\ge \bE\bigg[\frac{c_\eps|Z_{m,n}|}{
  s_n}\Ind{\{|Z_n|/\psi(n) \ge \eps,\,2^{-n}R_n \le t\}}\bigg]\ge \bE\bigg[\frac{c_\eps|Z_{m,n}|}{  s_n}\Ind{\{|Z_{m,n}|/s_n\ge 2\eps (1+\eps),\,2^{-n}R_{m,n} \le t -\delta\}}\bigg] \\
  &\ge \bE\bigg[\frac{c_\eps|Z_{m,n}|}{  s_n}\Ind{\{|Z_{m,n}|/s_n\ge2\eps (1+\eps)\}}\bigg]\cdot \bP\big[2^{-n}R_{n} \le t -\delta\big] ,
\end{split}
\end{equation*}
where the last step follows from \eqref{Rn inclusions eq} and the independence of $Z_{m,n}$ and $R_{m,n}$. Clearly, $\bP[2^{-n}R_{n} \le t -\delta] \rightarrow t-\delta$ and 
\begin{equation*}
    \bE\bigg[\frac{c_\eps|Z_{m,n}|}{
  s_n}\Ind{\{|Z_{m,n}|/s_n\ge2\eps (1+\eps)\}}\bigg] \longrightarrow \frac{c_\eps}{\sqrt{2\pi}}\int_{\{|z| \geq 2\varepsilon(1+\eps)\}}|z| e^{-z^{2} /2} \,d z,
\end{equation*}
since $s_{n-m}/s_n \rightarrow 1$ by \Cref{CLT cond prop}. Hence
\begin{align}\label{haha}
\liminf_{n\ua\infty}V_{n,t} \ge\frac{c_\eps(t-\delta)}{\sqrt{2\pi}}\int_{\{|z| \geq 2\varepsilon(1+\eps)\}}|z| e^{-z^{2} /2}\, d z.
\end{align}
To obtain a corresponding upper bound, we can argue exactly as in the derivation of \eqref{Vn q=0 upper bound} to get \begin{equation*}
\begin{split}
\limsup _{n \uparrow \infty}V_{n,t} \le  (1+\eps)^{\varrho/2}\limsup _{n \uparrow \infty}\bE\bigg[\frac{\eps+|Z_n|}{s_n}\mathbbm{1}_{\{2^{-n}R_{m,n} \le t\}}\bigg]=(1+\eps)^{\varrho/2}\limsup _{n \uparrow \infty}\bE\bigg[\frac{|Z_n|}{s_n}\mathbbm{1}_{\{2^{-n}R_{m,n} \le t\}}\bigg] .
\end{split}
\end{equation*}
Using again the independence of $Z_{m,n}$ and $R_{m,n}$, we find that
$$\bE[|Z_{n}|\mathbbm{1}_{\{2^{-n}R_{m,n} \le t\}}]\le \bE[|Z_{m,n}|]\cdot\bP[2^{-n}R_{m,n} \le t]+\sum_{k=0}^{m-1}|\beta_k|.
$$
By \eqref{Rn inclusions eq}, we have $\bP[2^{-n}R_{m,n} \le t]\le \bP[2^{-n}R_{n} \le t+\delta]\to t+\delta$. Recall that 
$\frac1{s_n}\sum_{k=0}^{m-1}|\beta_k|\to0$ as $n\ua\infty$.  Next, the function $\ell(x):=g(\log_2x)$ is slowly varying by \Cref{ell g remark}, and so 
\begin{equation}\label{g(n-m) eq}
\frac{g(n)}{g(n-m)}=\frac{\ell(2^n)}{\ell(2^{-m}2^n)}\lra1\qquad\text{as $n\ua\infty$.}
\end{equation}
Thus, our condition \eqref{Thm_eq_1} for $q=0$ implies that 
$$\bE\Big[\frac{|Z_{m,n}|}{s_n}\Big]=\frac{s_{n-m}}{s_n}\bE\Big[\frac{|Z_{m,n}|}{s_{n-m}}\Big]\lra\sqrt{\frac2\pi}\qquad\text{as $n\ua\infty$.}
$$
Altogether, we conclude that $\limsup_nV_{n,t}\le (1+\eps)^{\varrho/2}(t+\delta)\sqrt{2/\pi}$.  Combining this inequality with \eqref{haha} and sending $\eps$ and $\delta$ to zero, we conclude the proof of the linearity of the $\Phi_0$-variation.
\end{proof}

The proof of part (c) of  \Cref{Theorem on Phi-variation} will be prepared with the following two lemmas.

\begin{lemma}\label{Convergence_Lemma_2}
Under the conditions of Theorem \ref{Theorem on Phi-variation} (c),   there is a constant $K$ such that for all $\om\in\Om$ and all sufficiently large $n\in\bN$,
\[
2^{-qn}|Z_n(\om)|\leq K\sqrt{g(n)}\quad\text{and}\quad|Z_n(\om)|\leq Ks_n.
\]
\end{lemma}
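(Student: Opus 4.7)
The plan is to dispose of the stochastic content immediately and reduce both bounds to a single deterministic estimate. Since $|Y_m(\omega)|=1$ uniformly in $m$ and $\omega$, the triangle inequality gives
$$|Z_n(\omega)| \le \sum_{m=1}^n |\beta_{n-m}| = \sum_{m=0}^{n-1}|\beta_m|,$$
so it suffices to prove the deterministic bound $\sum_{m=0}^{n-1}|\beta_m| \le K'\cdot 2^{qn}\sqrt{g(n)}$ for large $n$; the second assertion then follows from the first because the hypothesis \eqref{Thm_eq_1} forces $s_n \sim 2^{qn}\sqrt{g(n)}$.

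To prove the deterministic bound, I would first translate the hypothesis into an asymptotic for the coefficients themselves. Setting $\ell(x):=g(\log_2 x)$, one has $\ell(2^n)=g(n)$, and $\ell$ is slowly varying by \Cref{ell g remark}, so \eqref{Thm_eq_1} reads $s_n^2/(2^{2qn}\ell(2^n))\to 1$. The equivalence (iii)$\Leftrightarrow$(iv) in \Cref{CLT cond prop}, applied with $b=2$, then yields
$$\beta_n^2/(2^{2qn}g(n))\longrightarrow 2^{2q}-1.$$
In particular there exist $C>0$ and $M\in\bN$ such that $|\beta_m|\le C\cdot 2^{qm}\sqrt{g(m)}=:C a_m$ for all $m\ge M$.

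Second, I would exploit that the auxiliary sequence $a_m=2^{qm}\sqrt{g(m)}$ grows geometrically in the tail. Because $g$ is regularly varying, $g(m+1)/g(m)\to 1$, hence $a_{m+1}/a_m \to 2^q>1$. Thus one can choose $M_0\ge M$ with $a_{m+1}/a_m\ge 2^{q/2}$ for all $m\ge M_0$, which telescopes into $a_m \le a_{n-1}\,2^{-q(n-1-m)/2}$ for $M_0\le m\le n-1$. Summing the geometric series and then using $g(n-1)/g(n)\to 1$ gives
$$\sum_{m=M_0}^{n-1} a_m \le \frac{a_{n-1}}{1-2^{-q/2}}\le \frac{2\cdot 2^{-q}}{1-2^{-q/2}}\cdot 2^{qn}\sqrt{g(n)}$$
for all sufficiently large $n$. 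The remaining contribution $\sum_{m=0}^{M_0-1}|\beta_m|$ is a fixed constant, which is dominated by $2^{qn}\sqrt{g(n)}\to\infty$ for large $n$, so it can be absorbed into the constant $K$. This gives $|Z_n(\omega)|\le K\,2^{qn}\sqrt{g(n)}$ uniformly in $\omega$, and hence the asserted $|Z_n(\omega)|\le K s_n$ as well.

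The main obstacle is the geometric-tail summation: one must leverage that $a_{m+1}/a_m$ approaches a ratio strictly greater than one to reduce the full sum to its largest term. This is precisely where the assumption $q>0$ of part (c) of \Cref{Theorem on Phi-variation} is essential (the analogous step fails for part (b), where $q=0$).
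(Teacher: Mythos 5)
Your argument is correct, and it shares the paper's overall reduction: bound $|Z_n(\om)|\le\sum_{m=0}^{n-1}|\beta_m|$ pointwise via $|Y_m|=1$, and then show the deterministic estimate $\sum_{m=0}^{n-1}|\beta_m|=O\big(2^{qn}\sqrt{g(n)}\big)$, from which $|Z_n|\le Ks_n$ follows since \eqref{Thm_eq_1} gives $s_n\sim 2^{qn}\sqrt{g(n)}$. Where you differ is in how the partial sums of $|\beta_m|$ are controlled. The paper does not pass through the coefficient asymptotics of \Cref{CLT cond prop}; instead it reuses the two-sided bounds \eqref{an2n ineq} on $|\beta_m|$ obtained inside the proof of \Cref{conv thm} and applies the Stolz--Ces\`aro theorem directly to $\sum_{m\le n}|\beta_m|\big/\big(2^{qn}\sqrt{g(n)}\big)$, bounding the $\limsup$ by an explicit constant. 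You instead extract the asymptotics $\beta_n^2/(2^{2qn}g(n))\to 2^{2q}-1$ from the equivalence (iii)$\Leftrightarrow$(iv) of \Cref{CLT cond prop} (whose proof is itself a converse Stolz argument) and then dominate the tail of $\sum_m 2^{qm}\sqrt{g(m)}$ by its last term via a geometric series, using $a_{m+1}/a_m\to2^q>1$. Both routes exploit $q>0$ at exactly the same point (geometric growth of the comparison sequence, equivalently the behaviour of the Stolz denominators); yours has the mild advantage of citing only statements of the paper rather than a display from inside a proof, while the paper's version needs only the upper-and-lower bounds \eqref{an2n ineq} and no converse-Stolz input. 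One detail you should make explicit: \eqref{Thm_eq_1} is condition (iv) of \Cref{CLT cond prop} only after replacing $\ell(x)=g(\log_2x)$ by the slowly varying function $(2^{2q}-1)\,\ell(x)$, since (iv) requires the limit $(2^{2q}-1)^{-1}$ rather than $1$; with this rescaling, (iii) yields precisely your asserted limit $\beta_n^2/(2^{2qn}g(n))\to2^{2q}-1$, and the rest of your argument goes through as written.
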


\begin{proof} The function $\ell(x):=g(\log_2x)$ is slowly varying by \Cref{ell g remark}, and so $\ell$ and the sequence $\{\beta_m\}_{m \in \bN_0}$ satisfy the conditions of \Cref{conv thm}. 
Recall from \eqref{an2n ineq} in the proof of  that theorem that there exists $\delta_1>0$ such that for every $\delta\in(0,\delta_1)$ there is $n_1\in\bN$ such that  for all $m\ge n_1$,
 \begin{equation*}\label{an2n ineq2}
    2^{qm}\sqrt{(1-\delta)\ell(2^m) - (1 + \delta)\ell(2^{m-1})2^{-2q}} <|\beta_m|<  2^{qm}\sqrt{(1+\delta)\ell(2^m) - (1 - \delta)\ell(2^{m-1})2^{-2q}},
    \end{equation*}
and the arguments of the two square roots are positive. 
Now let $\delta\in(0,\delta_1)$ be given and take $n_2\ge n_1$ such that  $  1 - \delta < g(n)/g(n-1) < 1 + \delta$  for any $n \ge n_2$; this is possible by \eqref{g(n-m) eq}. 
Since $g$ is regularly varying, we have $2^{2qn}g(n)\to\infty$ as $n\ua\infty$ (see, e.g., Proposition B.1.9 in \cite{deHaanFerreira}). Therefore, we may apply the  Stolz--Ces\'{a}ro theorem in its general form so as to obtain 
\begin{align*}
    \limsup_{n \ua \infty} \dfrac{\sum_{m = 0}^n{|\beta_m|}}{2^{qn}\sqrt{g(n)}} &\le  \limsup_{n \ua \infty} \dfrac{{|\beta_n|}}{2^{qn}\sqrt{g(n)}-2^{q(n-1)}\sqrt{g(n-1)}}\\ 
    &\le \limsup_{n \ua \infty}\dfrac{\sqrt{(1+\delta)g(n) - (1 - \delta)g(n-1)2^{-2q}}}{\sqrt{g(n)}-\sqrt{2^{-2q}g(n-1)}}\\ 
      & \le \dfrac{\sqrt{(1+\delta)^22^{2q} - (1-\delta)}}{\sqrt{2^{2q}(1-\delta})-1}.
\end{align*}
Since $|Z_n|\le\sum_{m = 0}^{n-1}|\beta_m|$, the claim follows.
\end{proof}

\begin{lemma}\label{Phi conv lemma} Suppose that $\{z_n\}_{n \in \bN_0}$ is a sequence of nonnegative numbers converging to $z>0$. Then $2^n\Phi_q(2^{-n}s_nz_n)\to z^{1/(1-q)}$ as $n\ua\infty$.
\end{lemma}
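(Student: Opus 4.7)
The plan is to substitute $x_n := 2^{-n}s_n z_n$ directly into the formula \eqref{Phiq eq} for $\Phi_q$ and chase the asymptotics, using condition \eqref{Thm_eq_1} together with the uniform convergence theorem for regularly varying functions. Specifically, from the definition,
\[
2^n\Phi_q(2^{-n}s_nz_n)=2^{\,n-n/(1-q)}s_n^{1/(1-q)}z_n^{1/(1-q)}\,g\!\left(\frac{n-\log_2 s_n-\log_2 z_n}{1-q}\right)^{-1/(2(1-q))},
\]
and the exponent simplifies to $n-n/(1-q)=-qn/(1-q)$.

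The first step is to absorb the $s_n$ factor. Writing $2^{-qn/(1-q)}s_n^{1/(1-q)}=\bigl(s_n^2/2^{2qn}\bigr)^{1/(2(1-q))}$ and invoking \eqref{Thm_eq_1}, this quantity is asymptotically equivalent to $g(n)^{1/(2(1-q))}$. Thus, up to a factor tending to $1$,
\[
2^n\Phi_q(2^{-n}s_nz_n)\;=\;z_n^{1/(1-q)}\left(\frac{g(n)}{g(a_n)}\right)^{1/(2(1-q))}(1+o(1)),\qquad a_n:=\frac{n-\log_2 s_n-\log_2 z_n}{1-q}.
\]
Since $z_n\to z>0$, it remains to show $g(a_n)/g(n)\to 1$.

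The second step is the ratio $a_n/n\to 1$. Again using \eqref{Thm_eq_1}, we have $\log_2 s_n=qn+\tfrac12\log_2 g(n)+o(1)$, so
\[
a_n=n-\frac{\log_2 g(n)}{2(1-q)}-\frac{\log_2 z_n}{1-q}+o(1).
\]
Because $g$ is regularly varying, we may write $g(x)=x^{\varrho}L(x)$ with $L$ slowly varying, giving $\log_2 g(n)=\varrho\log_2 n+\log_2 L(n)=o(n)$ by Proposition 1.3.6 (v) of \cite{BinghamGoldieTeugels}; since $z_n$ is bounded away from $0$ and $\infty$, $\log_2 z_n$ is bounded. Therefore $a_n/n\to1$.

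The third step applies the uniform convergence theorem for regularly varying functions (Theorem 1.5.2 in \cite{BinghamGoldieTeugels}): since $\lambda_n:=a_n/n\to 1$ lies in a compact subset of $(0,\infty)$, $g(\lambda_n n)/g(n)\to 1^{\varrho}=1$. Combining this with the asymptotic from step one gives $2^n\Phi_q(2^{-n}s_nz_n)\to z^{1/(1-q)}$, completing the proof. The only delicate point is step two, where one must verify that the log-corrections to $a_n$ are truly $o(n)$; this is the step that uses the regular variation of $g$ in an essential way, and it is what allows the uniform convergence theorem to be invoked without further quantitative control on $\varrho$.
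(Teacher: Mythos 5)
Your proposal is correct and follows essentially the same route as the paper's proof: the same factorization into the $\bigl(s_n/(2^{qn}\sqrt{g(n)})\bigr)^{1/(1-q)}$ term, the $z_n^{1/(1-q)}$ term, and the ratio $g(n)/g(a_n)$, with \eqref{Thm_eq_1} handling the first factor and the uniform convergence theorem (Theorem 1.5.2 in \cite{BinghamGoldieTeugels}) handling the last. The only cosmetic difference is that you control $\log_2 g(n)=o(n)$ via the representation $g(x)=x^{\varrho}L(x)$ and Proposition 1.3.6\,(v), whereas the paper uses logarithmic bounds $\kappa^{\pm}\log_2 n$ on $\log_2\sqrt{g(n)}$; both yield $a_n/n\to1$, so the argument is sound.
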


\begin{proof}We may assume without loss of generality that $z_n>0$ for all $n$. Then we may write 
\begin{equation*}\label{Phisplit eq}
\begin{split}
2^n\Phi_q(2^{-n}s_nz_n)&=2^n\big(2^{-n}s_nz_n\big)^{1/(1-q)}g\Big(\frac{n-\log_2s_n-\log_2z_n}{1-q}\Big)^{-1/(2(1-q))}\\
&=\Big(\frac{s_n}{2^{qn}\sqrt{g(n)}}\Big)^{1/(1-q)}z_n^{1/(1-q)}\Bigg(\frac{g(n)}{g\big(\frac{n-\log_2s_n-\log_2z_n}{1-q}\big)}\Bigg)^{1/(2(1-q))}.
\end{split}
\end{equation*}
By \eqref{Thm_eq_1}, the first factor on the right converges to 1. The second factor converges   to $z^{1/(1-q)}$ by assumption.  To deal with the third factor, \eqref{Thm_eq_1} implies that there is $n_0\in\bN$ such that 
\begin{equation*}
qn-\log_2\sqrt{g(n)}-1\le \log_2s_n\le qn-\log_2\sqrt{g(n)}+1\qquad\text{for all $n\ge n_0$.}
\end{equation*}
Theorem 1.4.1 and Proposition 1.3.6 (i) in \cite{BinghamGoldieTeugels} give moreover that  there exists $\kappa^+,\kappa^-\in\bR$ such that $\kappa^-\log_2n\le \log_2\sqrt{g(n)}\le\kappa^+\log_2n$. Thus, if $n\ge n_0$ is sufficiently large such that $z/2\le z_n\le 2z$, then
\begin{equation}\label{zn eq}
n+\frac{-1-\log_2(2z)-\kappa^+\log_2n}{1-q}\le \frac{n-\log_2s_n-\log_2z_n}{1-q}\le n+\frac{1-\log_2(z/2)-\kappa^-\log_2n}{1-q}.
\end{equation}
For sufficiently large $n$, the center term can thus be expressed as $n\lambda_n$, where $\lambda_n\in[1/2,3/2]$ and $\lambda_n\to1$. The uniform convergence theorem for regularly varying functions (e.g.,~Theorem 1.5.2 in \cite{BinghamGoldieTeugels}) hence implies that 
$$\frac{g(n)}{g\big(\frac{n-\log_2s_n-\log_2z_n}{1-q}\big)}\longrightarrow 1
$$
as $n\ua\infty$. This concludes the proof.
\end{proof}

\begin{proof}[Proof of  \Cref{Theorem on Phi-variation} {\rm (c)}] We prove  part {\rm (c)} only for $t=1$. The extension to $0<t<1$ is almost verbatim identical to the one in part (b) and hence left to the reader.  We write 
$$\wh Z_n:=\frac1{s_n}\sum_{m=1}^{n}|\beta_{n-m}|Y_m.
$$
Then the law of $2^n\Phi_q(2^{-n}|Z_n|)$ is the same as that of $2^n\Phi_q(2^{-n}s_n|\wh Z_n|)$.  By \Cref{conv thm}, we have $|\wh Z_n|\to | Z|$ in $L^\infty$. Moreover, by either Corollary 6.6 or Remark 6.7 in \cite{DutkayJorgensen}, the law of $Z$ has no atoms.  In particular, we have $\bP[Z=0]=0$. \Cref{Phi conv lemma}  hence yields that  $\bP$-a.s. $2^n\Phi_q(2^{-n}s_n|\wh Z_n|)\to|Z|^{1/(1-q)}$.  Fatou's lemma thus gives immediately that 
$$\liminf_{n\ua\infty}V_n=\liminf_{n\ua\infty}2^n\bE[\Phi_q(2^{-n}|Z_n|)]=\liminf_{n\ua\infty}\bE[2^n\Phi_q(2^{-n}s_n|\wh Z_n|)]\ge \bE[|Z|^{1/(1-q)}]. 
$$

To get an upper bound, we argue as in the proof of \Cref{Theorem on Phi-variation} {\rm(b)} for $t=1$ that there exists an increasing function $\Psi$ that is regularly varying at zero with index $1/(1-q)$  and satisfies $\Phi_q(x)\le\Psi(x)$ for all sufficiently small $x$ as well as $\Psi(x)/\Phi_q(x)\to1$ as $x\da0$. In particular, we have $2^n\Psi(2^{-n}s_nz_n)\to z^{1/(1-q)}$ if $\{z_n\}_{n \in \bN_0}$ is a sequence of nonnegative numbers converging to $z>0$. 
Let $\eps>0$ be given. We choose $x_0>0$ such that $\Psi(x)\le\Phi_q(x)/(1-\eps)$ for all $x\le x_0$. Since the sequence $\{\wh Z_n\}$ is uniformly bounded, there is $n_0\in\bN$ such that $2^{-n}s_n(\eps+|\wh Z_n|)\le x_0$ $\bP$-a.s.~for all $n\ge n_0$. For such $n$, we hence have
$$V_n=2^n\bE[\Phi_q(2^{-n}s_n|\wh Z_n|)]\le 2^n\bE[\Psi(2^{-n}s_n|\wh Z_n|)]\le 2^n\bE[\Psi(2^{-n}s_n(\eps+|\wh Z_n|))]\le \frac{2^n}{1-\eps}\bE[\Phi_q(2^{-n}s_n(\eps+|\wh Z_n|))].
$$
Moreover, we see from \eqref{zn eq} that there exists $n_1\ge n_0$ such that the random variables $\Lambda_n$ defined through
$$\Lambda_n:=\frac{n-\log_2s_n-\log_2(\eps+|\wh Z_n|)}{n(1-q)}
$$
take values in $[1/2,3/2]$ for $n\ge n_1$ and satisfy $\Lambda_n\to1$ $\bP$-a.s. Hence, with $\varrho$ denoting the index of regular variation of $g$,
$$0\le \frac{g(n)}{g\big(\frac{n-\log_2s_n-\log_2(\eps+|\wh Z_n|)}{1-q}\big)}= \frac{g(n)}{g(n\Lambda_n)}\le\sup_{1/2\le\lambda\le 3/2}\frac{g(n)}{g(n\lambda)}\lra\Big(\frac32\Big)^\varrho
$$
according to the uniform convergence theorem for regularly varying functions (e.g.,~Theorem 1.5.2 in \cite{BinghamGoldieTeugels}). Hence,  \Cref{Phi conv lemma} and dominated convergence give that 
$$\limsup_{n\ua\infty}V_n\le \limsup_{n\ua\infty}\frac1{1-\eps}2^n\bE[\Phi_q(2^{-n}s_n(\eps+|\wh Z_n|))]= \frac1{1-\eps}\bE[(\eps+|Z|)^{1/(1-q)}].$$
Sending $\eps\da0$ gives the desired upper bound.\end{proof}

 \begin{proof}[Proof of \Cref{g limit cor}]  (a) Taking logarithms in \eqref{Thm_eq_1} and using the fact that $\frac1n\log_2g(n)\to0$, we get $\frac1n\log_2s_n\to q$ as $n\ua\infty$. Hence, the result follows from \Cref{Prop Khine}.
 
 (b) If $0<c<\infty$, then \eqref{Thm_eq_1} holds also if $g$ is replaced with $\wt g(x):=c$. Hence, the assertion follows immediately from \Cref{Theorem on Phi-variation}. If $c=0$, then for any $\eps>0$ there is $\delta>0$ such that 
 $$g\Big(\frac{-\log_2\eta}{1-q}\Big)<\eps\qquad\text{for all $\eta<\delta$.}
 $$
 Now it suffices to take $n_0\in\bN$ such that $|f((k+1)2^{-n})-f(k2^{-n})| < \delta$ for all $k$ and $n\ge n_0$ to obtain that 
 \begin{equation*}
    \sum_{k=0}^{\lfloor t2^n\rfloor}|f((k+1)2^{-n})-f(k2^{-n})|^{p} \le \eps^{p/2}\sum_{k=0}^{\lfloor t2^n\rfloor}\Phi_q\big(|f((k+1)2^{-n})-f(k2^{-n})|\big).
\end{equation*}
 Sending $\eps\da0$ gives the result. The case $c=\infty$ is obtained analogously.
  \end{proof}

%  \noindent{\bf Data availability statement:} No data were used in the preparation of this manuscript. The authors will be happy to share upon request the Mathematica code for the generation of \Cref{rho figure}.

\bibliographystyle{plain}
\bibliography{CTbook}

\end{document}